\documentclass{amsart}
\usepackage{amsmath,amsthm,amssymb,amscd,setspace,mathrsfs,color}

\usepackage{graphicx}

\usepackage[all]{xy}

\newtheorem{thm}{Theorem}[section]
\newtheorem{prop}[thm]{Proposition}
\newtheorem{lem}[thm]{Lemma}
\newtheorem{cor}[thm]{Corollary}
\newtheorem{conj}[thm]{Conjecture}
\newtheorem{prob}[thm]{Problem}

\theoremstyle{definition}
\newtheorem{dfn}[thm]{Definition}
\theoremstyle{remark}

\theoremstyle{remark}
\newtheorem{rem}[thm]{Remark}

\newcommand{\ZZ}{\mathbb{Z}}

\newcommand{\Hom}{{\rm Hom}}

\newcommand{\K}{K}

\renewcommand{\L}{L}

\newcommand{\X}{X}

\newcommand{\Sd}{{\rm Sd}}
\newcommand{\st}{\mathsf{st}^\circ}
\newcommand{\St}{\overline{\mathsf{st}}}

\newcommand{\HH}{\boldsymbol{H}}

\newcommand{\conn}{{\rm conn}}

\newcommand{\GG}{\mathscr{G}}
\renewcommand{\SS}{\mathscr{S}}

\newcommand{\ind}{{\rm ind}}

\title{$\ZZ_2$-indices and Hedetniemi's conjecture}
\author{Takahiro Matsushita}
\address{Department of Mathematical Sciences, University of the Ryukyus,
Nishihara-cho, Okinawa, 903-0213, Japan}

\email{mtst@sci.u-ryukyu.ac.jp}

\subjclass[2010]{Primary 55U10, 05C15, Secondary 55P91}

\begin{document}

\maketitle

\begin{abstract}
The $\ZZ_2$-index $\ind(X)$ of a $\ZZ_2$-CW-complex $X$ is the smallest number $n$ such that there is a $\ZZ_2$-map from $X$ to $S^n$. Here we consider $S^n$ as a $\ZZ_2$-space by the antipodal map. Hedetniemi's conjecture is a long standing conjecture in graph theory concerning the graph coloring problem of tensor products of finite graphs. We show that if Hedetniemi's conjecture is true, then $\ind(X \times Y) = \min \{ \ind(X) , \ind(Y)\}$ for every pair $X$ and $Y$ of finite $\ZZ_2$-complexes.
\end{abstract}

\section{Introduction}

The {\it $\ZZ_2$-index $\ind(X)$ of a $\ZZ_2$-CW-complex $X$} is the smallest integer $n$ such that there is a $\ZZ_2$-map from $X$ to $S^n$. Here we consider $S^n$ as a $\ZZ_2$-space by the antipodal map. This invariant has been studied in homotopy theory for a long time (see \cite{CF} and \cite{Stolz}). On the other hand, Hedetniemi's conjecture is a long standing conjecture in graph theory, concerning the chromatic numbers of tensor products of graphs (see \cite{Sauer}, \cite{Tardif}, and \cite{Zhu}). The purpose of this paper is to correspond Hedetniemi's conjecture to $\ZZ_2$-indices of product spaces.

We first recall some basic terminology in graph theory. An {\it $n$-coloring of a simple graph $G$} is a function $c : V(G) \rightarrow \{ 1,\cdots, n\}$ such that $(v,w) \in E(G)$ implies $c(v) \neq c(w)$. The {\it chromatic number $\chi(G)$ of $G$} is the smallest integer $n$ such that $G$ has an $n$-coloring. The graph coloring problem, which is one of the most classical problems in graph theory, is to determine the chromatic number.

The following conjecture was suggested by Hedetniemi \cite{Hedetniemi} in 1966. Although there is not much supporting evidence, this conjecture has still survived.

\begin{conj}[Hedetniemi's conjecture] \label{conj 1.1}
For every pair of finite graphs $G$ and $H$, the equality
$$\chi(G \times H) = \min \{ \chi(G), \chi(H)\}$$
holds. Here we denote by $G \times H$ the tensor product (see Section 2) of $G$ and $H$.
\end{conj}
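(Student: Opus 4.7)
The easy inequality $\chi(G\times H)\le\min\{\chi(G),\chi(H)\}$ I would dispatch directly: if $c:V(G)\to\{1,\dots,n\}$ is an $n$-coloring of $G$ and $\pi_G:V(G\times H)\to V(G)$ is the projection, then $c\circ\pi_G$ is an $n$-coloring of $G\times H$, because every edge of the tensor product projects to an edge of $G$, so adjacent vertices in $G\times H$ receive distinct colors. The symmetric construction for $H$ completes this half.

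The substance of the statement is the reverse inequality $\chi(G\times H)\ge\min\{\chi(G),\chi(H)\}$. My plan is to argue by contradiction: fix an $n$-coloring $c:V(G)\times V(H)\to\{1,\dots,n\}$ of $G\times H$ with $n<\min\{\chi(G),\chi(H)\}$, and try to distill from $c$ an $n$-coloring of one of the factors. For each $v\in V(G)$ the slice $c_v:V(H)\to\{1,\dots,n\}$ with $c_v(w)=c(v,w)$ is typically not a proper coloring of $H$; the strategy is to aggregate the family $\{c_v\}_{v\in V(G)}$ into a single $n$-coloring $C:V(H)\to\{1,\dots,n\}$ by selecting $C(w)$ as a function of the multiset $\{c_v(w):v\in V(G)\}$, engineered so that adjacency in $H$ forces $C(w)\ne C(w')$. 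The chromatic hypothesis $\chi(G)>n$ should enter as a constraint on which slices $c_v$ can be mutually compatible: if too many agreed, one could pull back a coloring to $G$.

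The main obstacle is that no such selection rule is known to exist in general. This is the same wall met by El-Zahar and Sauer beyond $n=3$, and it is what has kept Hedetniemi's conjecture open. A route in keeping with the present paper would be to lower-bound $\chi(G\times H)$ topologically via Lov\'asz's theorem $\chi(G)\ge \ind(\B(G))+2$ and a matching multiplicativity statement $\ind(\B(G\times H))\ge\min\{\ind(\B(G)),\ind(\B(H))\}$ on box complexes. However, the bound $\chi\ge\ind+2$ can be arbitrarily loose, so even a perfect topological product formula would not recover the chromatic equality on the nose; closing that gap between $\chi$ and $\ind+2$ is the step I expect any serious attack to founder on, and it is the reason the paper states this only as a conjecture and then extracts the clean topological consequence for $\ind$ in its place.
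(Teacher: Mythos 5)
You were asked to ``prove'' Conjecture \ref{conj 1.1}, which is Hedetniemi's conjecture itself; the paper never proves it and never claims to --- it is the standing hypothesis from which Theorem \ref{thm 1.2} extracts a topological consequence. Your treatment is therefore the right one. The inequality $\chi(G \times H) \le \min\{\chi(G),\chi(H)\}$ via composing a coloring with the projections is exactly the observation the paper makes in Section 2, and your refusal to manufacture the reverse inequality is accurate: no proof is known, El-Zahar--Sauer's result at $n=3$ is indeed the frontier of the ``aggregate the slices $c_v$'' method, and your point that the topological route cannot close the gap --- because the bound $\chi(G) \ge \ind(B(G)) + 2$ can be arbitrarily loose --- is precisely the paper's own remark in Section 3 (citing Hell for the tight case and Matsushita/Walker for the looseness). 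One addendum worth recording: the conjecture was in fact disproved by Shitov in 2019, after this paper was written, so the hard inequality is false in general and any purported blind proof of it would necessarily contain an error. This is also why the paper's contrapositive reading of Theorem \ref{thm 1.2} (a pair of finite $\ZZ_2$-complexes with $\ind(X \times Y) < \min\{\ind(X),\ind(Y)\}$ would refute the conjecture) was a meaningful proposal rather than an idle one.
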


Following \cite{Tardif}, we say that the proposition {\it $\HH(n)$ holds} if and only if for every pair $G$ and $H$ of finite graphs, $\chi(G \times H) = n$ implies $\min \{ \chi(G), \chi(H)\} = n$. Thus Hedetniemi's conjecture is true if and only if $\HH(n)$ holds for every $n$. Similarly, let us say that {\it $\HH' (n)$ holds} if for every pair $X$ and $Y$ of finite $\ZZ_2$-complexes, $\ind(X \times Y) = n$ implies $\min \{ \ind(X), \ind(Y)\} = n$. In this terminology, our main result is formulated as follows:

\begin{thm}\label{thm 1.2}
For a positive integer $n$, $\HH(n)$ implies $\HH'(n-2)$. Therefore if Hedetniemi's conjecture is true, then the equality
$$\ind(X \times Y) = \min \{ \ind(X), \ind(Y)\}$$
holds for every pair $X$ and $Y$ of finite $\ZZ_2$-complexes.
\end{thm}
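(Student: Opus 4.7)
The plan is to translate the $\ZZ_2$-index statement into one about graph chromatic numbers via the box complex $B(G):=\Hom(K_2,G)$ equipped with its natural $\ZZ_2$-action (swap of the two copies of a vertex), and then to invoke $\HH(n)$ directly.

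I would first collect three preparatory ingredients: \textbf{(i)} a Csorba-type realization theorem, asserting that every finite free $\ZZ_2$-CW complex is $\ZZ_2$-homotopy equivalent to $B(G)$ for some finite graph $G$; \textbf{(ii)} the product compatibility $B(G\times H)\cong B(G)\times B(H)$ as $\ZZ_2$-spaces, which follows formally from $\Hom(K_2,G\times H)\cong \Hom(K_2,G)\times\Hom(K_2,H)$; and \textbf{(iii)} Lov\'asz's inequality $\chi(F)\ge\ind(B(F))+2$, together with $B(K_n)\simeq_{\ZZ_2}S^{n-2}$. Ingredient~(iii) accounts for the shift by $2$ in the statement.

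With these in hand, my approach is by contraposition. Assume $\HH(n)$, let $X,Y$ be finite $\ZZ_2$-complexes (which we may take to be free, else the indices are $\infty$ and the statement is vacuous) with $\ind(X),\ind(Y)\ge n-1$, and suppose $\ind(X\times Y)=n-2$; I aim for a contradiction. Using (i), choose finite graphs $G,H$ with $B(G)\simeq_{\ZZ_2}X$ and $B(H)\simeq_{\ZZ_2}Y$, so that (ii) yields $B(G\times H)\simeq_{\ZZ_2}X\times Y$. Lov\'asz's bound (iii) then gives $\chi(G),\chi(H)\ge n+1$ as well as $\chi(G\times H)\ge n$. If one further knew $\chi(G\times H)\le n$, then $\chi(G\times H)=n$, and $\HH(n)$ would force $\min\{\chi(G),\chi(H)\}=n$, contradicting the previous bound and completing the proof.

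The hard part is therefore to lift the $\ZZ_2$-map witnessing $\ind(X\times Y)\le n-2$, namely $B(G\times H)\simeq_{\ZZ_2}X\times Y\to S^{n-2}\simeq_{\ZZ_2}B(K_n)$, to an $n$-coloring of $G\times H$, i.e.\ a graph homomorphism $G\times H\to K_n$. Because Lov\'asz's bound is in general not tight, no such lift exists for arbitrary realizations $G,H$. The technical core of the proof is thus to arrange the realizations in (i) with additional care---perhaps functorially from $X,Y$ together with the specific witness map $X\times Y\to S^{n-2}$, or via a subdivision argument on $B(K_n)\simeq S^{n-2}$---so that graph homomorphisms $G\times H\to K_n$ correspond to $\ZZ_2$-maps $B(G\times H)\to B(K_n)$ in this critical range. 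I expect this compatibility of realizations with products to be the main obstacle.
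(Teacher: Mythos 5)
Your reduction is set up in the right spirit, and ingredients (i)--(iii) are all available (indeed (ii) holds on the nose for the clique-complex model of $B$, and (iii) gives the shift by $2$). But the step you yourself flag as "the hard part" is a genuine gap, and the remedy you sketch --- realizing $X$ and $Y$ as $B(G)$ and $B(H)$ for carefully chosen $G,H$ and then hoping that $\ZZ_2$-maps $B(G\times H)\to B(K_n)$ lift to homomorphisms $G\times H\to K_n$ --- cannot work in that form. For a \emph{fixed} graph $F$, the gap between $\chi(F)$ and $\ind(B(F))+2$ can be arbitrarily large (the paper notes this, citing \cite{Matsushita 1} and Walker), so no amount of care in choosing a single pair $(G,H)$ with $B(G)\simeq_{\ZZ_2}X$, $B(H)\simeq_{\ZZ_2}Y$ will force $\chi(G\times H)\le n$ from $\ind(X\times Y)\le n-2$. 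What is needed is not a better realization but a \emph{sequence} of graphs attached to a triangulation $K$ of $X$ whose chromatic numbers decrease to the topological bound, and a mechanism converting $\ZZ_2$-maps \emph{into} $B(K_n)$ to homomorphisms \emph{into} $K_n$.

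The paper supplies exactly this by working on the other side of an adjunction: Csorba's functor $A:\SS^{\ZZ_2}\to\GG$ is left adjoint to $B$ and preserves finite products (Theorem \ref{thm 1.3}), so a $\ZZ_2$-\emph{simplicial} map $\Sd^k(K)\times\Sd^k(L)\to B(K_n)$ corresponds bijectively to a graph homomorphism $A(\Sd^k(K))\times A(\Sd^k(L))\to K_n$, i.e.\ an $n$-coloring. A $\ZZ_2$-simplicial approximation theorem for products (Proposition \ref{prop approximation}) upgrades the continuous witness $|K|\times|L|\to S^{n-2}\cong|B(K_n)|$ to such a simplicial map after enough subdivision, yielding the exact identities $\ind(|K|)+2=\chi(A(\Sd^k(K)))$ and $\ind(|K|\times|L|)+2=\chi(A(\Sd^k(K))\times A(\Sd^k(L)))$ for large $k$; applying $\HH(n)$ to these graphs finishes the proof with no contraposition needed. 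So the graphs are produced \emph{from} the complexes by $A\circ\Sd^k$, not chosen so that their box complexes realize $X$ and $Y$; this is the idea your proposal is missing, and without it the argument does not close.
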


Hence, if there are finite $\ZZ_2$-complexes $X$ and $Y$ such that $\ind(X \times Y)$ is smaller than $\min \{ \ind(X), \ind(Y)\}$, then Hedetniemi's conjecture is false. Since it is easy to see that $\ind(X \times Y) \le \min \{\ind(X), \ind(Y)\}$, we pose the following problem:

\begin{prob}
Does there exist a pair of finite $\ZZ_2$-complexes $X$ and $Y$ such that $\ind(X \times Y) < \min \{ \ind(X) , \ind(Y)\}$?
\end{prob}

It is clear that $\HH(1)$ and $\HH(2)$ are true. El-Zahar and Sauer \cite{ES} show that $\HH(3)$ is true. Thus Theorem \ref{thm 1.2} gives the following corollary. However, this corollary has a direct topological proof, which will be given in Section 6.

\begin{cor} \label{cor 1.2.1}
Let $X$ and $Y$ be finite $\ZZ_2$-complexes. If there is a $\ZZ_2$-map from $X \times Y$ to $S^1$, then either $X$ or $Y$ has a $\ZZ_2$-map into $S^1$.
\end{cor}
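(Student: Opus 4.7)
The plan is to recast $\ind \le 1$ as a torsion condition on first homology, then exhibit a torsion class in $H_1((X\times Y)/\ZZ_2)$ witnessing $\ind(X\times Y)\ge 2$. After restricting to $\sigma$-invariant connected components (which preserves the $\ZZ_2$-index), we may assume $X$ and $Y$ are connected. For such $X$, with characteristic class $w_X \in H^1(X/\ZZ_2;\ZZ_2) = \Hom(H_1(X/\ZZ_2),\ZZ_2)$, a $\ZZ_2$-map $X\to S^1$ is a map $X/\ZZ_2 \to S^1 = K(\ZZ,1)$ whose pulled-back double cover is $X\to X/\ZZ_2$; equivalently, $w_X$ lifts along the mod-$2$ reduction $\Hom(H_1(X/\ZZ_2),\ZZ) \to \Hom(H_1(X/\ZZ_2),\ZZ_2)$, which happens iff $w_X$ vanishes on the torsion subgroup $T \subseteq H_1(X/\ZZ_2)$. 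Hence $\ind(X) \ge 2$ iff there is a torsion $t_X \in H_1(X/\ZZ_2)$ with $w_X(t_X) = 1$, and likewise $t_Y$ for $Y$.

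Set $B = X/\ZZ_2 \times Y/\ZZ_2$ and let $q\colon E := (X\times Y)/\ZZ_2 \to B$, $[x,y]\mapsto([x],[y])$, be the natural double cover, whose classifying class a short monodromy computation identifies as $c = w_X\times 1 + 1\times w_Y$. Then $\pi_1(E) = \ker(c\colon\pi_1(B)\to\ZZ_2)$, and via the projection $\pi_X\colon E\to X/\ZZ_2$ one checks $w_{X\times Y}(\alpha,\beta) = w_X(\alpha) = w_Y(\beta)$ on $\pi_1(E)$. Choose lifts $g_X\in\pi_1(X/\ZZ_2)$, $g_Y\in\pi_1(Y/\ZZ_2)$ of $t_X, t_Y$; the identity $w_X(g_X) + w_Y(g_Y) = 1+1 = 0$ places $(g_X, g_Y)$ in $\pi_1(E)$, and its abelianization $t\in H_1(E)$ satisfies $w_{X\times Y}(t) = 1$.

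It remains to show $t$ is torsion in $H_1(E)$. Since $q_*(t) = (\bar g_X, \bar g_Y)$ is torsion in $H_1(B) = H_1(X/\ZZ_2) \oplus H_1(Y/\ZZ_2)$, it is enough to show $\ker q_*$ is torsion. The sheaf short exact sequence $0\to\ZZ\to q_*\ZZ\to\ZZ(w)\to 0$ on $B$ splits rationally into $\mathbb{Q}\oplus\mathbb{Q}(w)$, yielding $H_1(E;\mathbb{Q}) = H_1(B;\mathbb{Q})\oplus H_1(B;\mathbb{Q}(w))$ with $\ker q_*\otimes\mathbb{Q} = H_1(B;\mathbb{Q}(w))$. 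By K\"unneth with local coefficients, this equals $\bigoplus_{i+j=1} H_i(X/\ZZ_2;\mathbb{Q}(w_X)) \otimes H_j(Y/\ZZ_2;\mathbb{Q}(w_Y))$, and in each summand one factor is $H_0(-;\mathbb{Q}(w_\bullet)) = \mathbb{Q}/2\mathbb{Q} = 0$ (coinvariants of a nontrivial $\ZZ_2$-action on $\mathbb{Q}$). Hence $t$ is torsion, and $\ind(X\times Y)\ge 2$.

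The main obstacle I anticipate is justifying the rational splitting of the cover's sheaf sequence and the K\"unneth step with twisted coefficients; the torsion characterization and the explicit construction of $(g_X,g_Y)$ are then formal.
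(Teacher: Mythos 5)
Your argument is correct, but it runs in the opposite direction from the paper's and uses different machinery. The paper (Section 6) stays at the level of fundamental groups: a $\ZZ_2$-map $Z\to S^1$ is encoded as a homomorphism $f\colon \pi_1(\overline{Z})\to\ZZ$ with $f^{-1}(2\ZZ)=\pi_1(Z)$ (Lemma \ref{lem 6.1}), and from such an $f$ on $\pi_1(\overline{X\times Y})$ it \emph{constructs} one on a factor by the formula $g(x)=f(x^2,0)/2$, the only real work being a commutator identity showing $g$ is additive. You instead abelianize, recast ``no $\ZZ_2$-map to $S^1$'' as ``the classifying class $w$ is nonzero on the torsion subgroup of $H_1$ of the orbit space'' (equivalent to the paper's Lemma \ref{lem 6.1} by universal coefficients, using that $H_1$ is finitely generated), and then \emph{propagate the obstruction} from the factors to the product; the substance is the vanishing of $H_1(B;\mathbb{Q}(c))$ via the rational splitting of the pushforward of the double cover and the K\"unneth formula with local coefficients, both standard and correctly applied here --- the key point, that $H_0$ with sign coefficients over $\mathbb{Q}$ is the coinvariants and hence vanishes when the monodromy is nontrivial, is exactly where connectedness and freeness of the actions on $X$ and $Y$ enter. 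Your route costs more technology but makes the mechanism transparent (torsion classes detecting $\ind\ge 2$ on the factors pair up to a torsion class on the product), while the paper's is more elementary, avoids homology altogether, and, as the paper notes, does not need finiteness, whereas your lifting criterion uses finite generation of $H_1$ in the direction applied to the factors. Two pieces of housekeeping, which the paper also leaves implicit: non-free actions should be disposed of first (a fixed point of $X$ gives an equivariant embedding of $Y$ into $X\times Y$, so that case is immediate), and the reduction to connected pieces is cleanest phrased as choosing invariant components $C_X\subset X$ and $C_Y\subset Y$ admitting no $\ZZ_2$-map to $S^1$ and observing that $C_X\times C_Y$ is a connected invariant subcomplex of $X\times Y$ to which any $\ZZ_2$-map $X\times Y\to S^1$ restricts.
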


In the proof of Theorem \ref{thm 1.2}, we use the box complex functor. The box complex $B(G)$ is a $\ZZ_2$-space associated to a graph $G$ introduced in the context of topological lower bounds for chromatic numbers (see Section 3). This complex has a long history starting from Lov\'asz's proof of Kneser's conjecture \cite{Lovasz}, and we refer to \cite{Kozlov book} for the background.

In this paper, we use the right adjoint property of the box complex functor. Since the box complex $B$ is a functor, the existence of graph homomorphisms from $G$ to $H$ clearly implies the existence of $\ZZ_2$-simplicial maps from $B(G)$ to $B(H)$. On the other hand, if the box complex $B$ is merely a functor, then there is no reason which associate the existence of $\ZZ_2$-simplicial maps to the existence of graph homomorphisms. However, if $B$ is a right adjoint functor, one can deduce the existence of graph homomorphisms to $G$ from the existence of $\ZZ_2$-simplicial maps to $B(G)$.

Here is a slightly complicated problem. Namely, there are several definitions of box complexes and some of them have no left adjoint. Moreover, in the proof of Theorem \ref{thm 1.2}, we want its left adjoint preserve finite products. So the choice of definitions of box complexes is a crucial problem.

To state our main technical result explicitly, we denote by $\GG$ the category of finite graphs, and by $\SS^{\ZZ_2}$ the category of finite $\ZZ_2$-simplicial complexes. Csorba \cite{Csorba} constructed a functor $A : \SS^{\ZZ_2} \rightarrow \GG$ (the precise definition is found in Section 3) to show that for every free $\ZZ_2$-simplicial complex $K$, there is a graph whose box complex is $\ZZ_2$-homotopy equivalent to $|K|$. In Section 3, we show that this functor has the following remarkable properties.

\begin{thm} \label{thm 1.3}
Csorba's functor $A : \SS^{\ZZ_2} \rightarrow \GG$ is a left adjoint functor preserving finite limits. Moreover, the geometric realization of its right adjoint is naturally $\ZZ_2$-homotopy equivalent to the box complex functor.
\end{thm}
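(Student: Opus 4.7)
My plan is to exhibit an explicit right adjoint $\N \colon \GG \to \SS^{\ZZ_2}$ to $\A$, verify the adjunction and preservation of finite limits, and then identify the geometric realization of $\N$ with the box complex up to natural $\ZZ_2$-homotopy. For a finite graph $G$, I would define $\N(G)$ to have vertex set $V(G) \sqcup V(G)$ with $\ZZ_2$ acting by swapping the two copies (written $(v,0)$ and $(v,1)$), and declare a finite non-empty subset $S \subseteq V(G) \sqcup V(G)$ to span a simplex precisely when every cross-pair $(v,0),(w,1) \in S$ satisfies $(v,w) \in E(G)$. This shape is essentially forced by what $\A$ must do on a single free $\ZZ_2$-orbit of a $1$-simplex.

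Next, I would verify the adjunction $\Hom_\GG(\A(K), G) \cong \Hom_{\SS^{\ZZ_2}}(K, \N(G))$ naturally in both variables. The key observation is that a $\ZZ_2$-simplicial map $K \to \N(G)$ is equivalent, by freeness of the $\ZZ_2$-action on $K$, to a function $\varphi \colon V(K) \to V(G)$ (together with a $\ZZ_2$-equivariant labelling that is automatic) whose simplex-preservation requirement for $\N(G)$ unravels exactly to the edge-preservation requirement for a graph homomorphism $\A(K) \to G$. Consequently $\A$ is a left adjoint and preserves all colimits. To establish preservation of finite limits it suffices to treat products and equalizers: one has $V(\A(K \times L)) = V(K) \times V(L) = V(\A(K) \times \A(L))$, and the edge condition for $\A(K \times L)$---that a pair spans a simplex of $K \times L$---factors through the simplices of $K$ and $L$ separately, reproducing exactly the edge relation of the product graph $\A(K) \times \A(L)$. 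Equalizers are handled by the same elementary bookkeeping.

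For the second claim of the theorem, I would compare $|\N(G)|$ to the box complex $B(G)$ used in the paper. By construction the simplices of $\N(G)$ are precisely the complete bipartite configurations $S \sqcup T$ with $S \times T \subseteq E(G)$ indexing the cells of $B(G)$, and the $\ZZ_2$-actions manifestly coincide. Depending on whether the chosen variant of $B$ admits an empty $S$ or $T$, either $|\N(G)| = B(G)$ on the nose, or the two complexes differ only by harmless cone-like attachments that a natural $\ZZ_2$-equivariant deformation retraction collapses away, supplying the required $\ZZ_2$-homotopy equivalence, natural in $G$.

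The hard part will be preservation of finite limits. Although the adjunction forces $\A$ to be a left adjoint, preservation of products is a genuinely special property---not automatic from an adjunction and often false for combinatorial left adjoints. The success of the argument hinges on the fact that simplices of a product simplicial complex are exactly products of simplices, combined with the local, one-simplex-at-a-time nature of Csorba's edge relation. Setting up the correct definition of $\N(G)$ at the start is what makes this local form of the edge relation visible, and thus makes the finite-limit preservation fall out by a direct check rather than by a subtle diagram chase.
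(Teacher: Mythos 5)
Your outline for the limit-preservation half is sound and matches the paper's own argument (a direct check on products and equalizers, using that simplices of $K \times L$ are exactly the subsets projecting to simplices of $K$ and of $L$). The problem is the construction of the right adjoint, and this breaks both remaining claims. Your $\N(G)$, with vertex set $V(G) \sqcup V(G)$ and simplices constrained only on cross-pairs, is the complex usually denoted $B_0(G)$, and it is \emph{not} right adjoint to $\A$. Concretely, take $K$ to be two isolated points swapped by the involution. Then $\A(K) = K_2$, so $\Hom_{\GG}(\A(K),G)$ is the set of ordered edges of $G$; but every singleton is a simplex of your $\N(G)$, so $\Hom_{\SS^{\ZZ_2}}(K,\N(G))$ is in bijection with $V(G) \times \{0,1\}$. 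These sets have different cardinalities in general, so no natural bijection exists, and since right adjoints are unique up to natural isomorphism no choice of unit and counit can repair this. The underlying error is in your ``unravelling'': a $\ZZ_2$-map $g \colon K \to \N(G)$ forces $g(\alpha(v))$ to have the same $V(G)$-coordinate as $g(v)$, whereas a graph homomorphism $f \colon \A(K) \to G$ need not satisfy $f(\alpha(v)) = f(v)$; the labelling $\epsilon$ is genuinely extra data, not automatic, and the cross-pair condition is strictly weaker than edge-preservation for $\A(K)$.

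The correct right adjoint --- the paper's $B(G)$ --- is the clique complex of the exponential graph $G^{K_2}$: its vertices are ordered edges $(a,b)$ of $G$ (equivalently graph homomorphisms $K_2 \to G$), a set $\{(a_0,b_0),\dots,(a_d,b_d)\}$ spans a simplex iff $a_i \sim b_j$ for all $i,j$, and the adjunction sends $f \colon \A(K) \to G$ to $v \mapsto (f(v), f(\alpha(v)))$. Your closing hedge about ``empty $S$ or $T$'' is exactly where the two complexes diverge, and the difference is not a harmless cone: $\N(K_2)$ is a $4$-cycle with the antipodal action, i.e.\ $S^1$, while $B(K_2) \simeq_{\ZZ_2} S^0$; more generally $\N(K_n)$ is the boundary of the $n$-dimensional cross-polytope, $S^{n-1}$, while $B(K_n) \simeq_{\ZZ_2} S^{n-2}$. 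Since these have different $\ZZ_2$-indices there is no $\ZZ_2$-map from the former to the latter at all, let alone a natural $\ZZ_2$-homotopy equivalence, and the resulting index shift would also wreck the application to Theorem \ref{thm 1.2}. The second half of your argument needs to be restarted from the correct definition of the right adjoint.
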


Our box complex $B$ is the left adjoint to $A$. In fact, this box complex is rarely found in the literature, and the precise description is given in Section 3.

\begin{rem} \label{rem 1.6}
Here we mention the converse of Theorem \ref{thm 1.2}. Of course, if Hedetniemi's conjecture is true, then the converse clearly holds. However, it seems to be difficult to prove the converse of Theorem \ref{thm 1.2} directly. As was mentioned, we associate the existence of $\ZZ_2$-simplicial maps to the existence of graph homomorphisms in the proof of this theorem. However, the existence of $\ZZ_2$-simplicial maps is strictly stronger than the existence of $\ZZ_2$-continuous maps.
\end{rem}

Finally, we mention that Theorem \ref{thm 1.2} was independently proved by Wrochna \cite{Wrochna2} in a different way. He used the $k$-th inverse power $\Omega_k$ for a positive odd integer $k$, which has been used in graph theory (see \cite{GJS}, \cite{HH}, \cite{HT}, and \cite{Tardif1}). In fact, $\Omega_k$ is a right adjoint functor (and hence they preserves products), and Wrochna proved some close relationship between $\Omega_k$ and box complexes. Roughly speaking, he showed that $\Omega_k$ plays a role similar to the barycentric subdivision of simplicial complexes. Namely, he showed that $B(\Omega_k(G))$ is naturally $\ZZ_2$-homotopy equivalent to $B(G)$, and that there is a $\ZZ_2$-continuous map from $B(G)$ to $B(H)$ if and only if there is a graph homomorphism $\Omega_k(G) \to H$ for sufficiently large $k$.



The rest of the paper is organized as follows. In Section 2, we briefly review basic facts and definitions concerning graphs and simplicial complexes. In Section 3, we recall the definition of Csorba's functor $A$ and prove Theorem \ref{thm 1.3}. In Section 4, we show the $\ZZ_2$-simplicial approximation theorem for products of $\ZZ_2$-simplicial complexes. We need this theorem to relate the $\ZZ_2$-indices of products to chromatic numbers of tensor products. We complete the proof of Theorem \ref{thm 1.2} in Section 5. Finally, we give a direct proof of Corollary \ref{cor 1.2.1} in Section 6.

\vspace{2mm} \noindent
{\bf Acknowledgements.} The author thanks Daisuke Kishimoto for helpful comments. He also thanks to Marcin Wrochna for informing his work \cite{Wrochna2} which overlaps many parts with this paper. Finally, he thanks to the anonymous referees for their valuable comments, which made the manuscript much more readable. The author was supported by the Grand-in-Aid for Scientific Research (KAKENHI 28-6304).

\section{Preliminaries}

In this section, we review necessary definitions and facts concerning graphs and simplicial complexes. For a more concrete explanation, we refer to \cite{Kozlov book}. Moreover, we need a bit of knowledge of category theory, which is found in \cite{Leinster}.

\subsection{Graphs}
A {\it graph} is a pair $G = (V(G), E(G))$ consisting of a finite set $V(G)$ together with a symmetric subset $E(G)$ of $V(G) \times V(G)$. A graph having no looped vertices is called a {\it simple graph}. For a pair $v$ and $w$ of vertices, we write $v \sim w$ to mean that $v$ and $w$ are adjacent. A graph homomorphism is a map $f: V(G) \rightarrow V(H)$ such that $(v,w) \in E(G)$ implies $(f(v), f(w)) \in E(H)$. Let $\GG$ denote the category of graphs whose morphisms are graph homomorphisms.

For a non-negative integer $n$, the {\it complete graph $K_n$ with $n$-vertices} is defined as follows: The vertex set $V(K_n)$ is the $n$-point set $[n] = \{ 1,\cdots, n\}$, and the edge set $E(K_n)$ is $\{ (i,j) \; | \; i,j \in [n], i \neq j\}$. Then an $n$-coloring of $G$ is identified with a graph homomorphism from $G$ to $K_n$. In particular, if there is a graph homomorphism from $G$ to $H$, then we have $\chi(G) \leq \chi(H)$.

Let $G$ and $H$ be graphs. The {\it tensor (or categorical) product $G \times H$} is defined by
$$V(G \times H) = V(G) \times V(H),$$
$$E(G \times H) = \{ ((v,w),(v',w')) \; | \; (v,v') \in E(G), \; (w,w') \in E(H)\}.$$
Since the projections $G \times H \rightarrow G$ and $G \times H \rightarrow H$ are graph homomorphisms, the inequality $\chi(G \times H) \leq \min \{ \chi(G) , \chi(H)\}$ is obvious.

\subsection{Simplicial complexes}
In this paper, we use the term ``simplicial complex'' to mean ``finite abstract simplicial complex''. The geometric realization of a simplicial complex $K$ is denoted by $|K|$.

The {\it face poset $FK$ of a simplicial complex $K$} is the poset of non-empty simplices ordered by inclusion. The {\it order complex $\Delta(P)$ of a poset $P$} is the simplicial complex consisting of finite chains in $P$. The order complex of the face poset of $K$ is called the {\it barycentric subdivision of $K$}, and is denoted by $\Sd(K)$. It is known that there is a natural homeomorphism $|K| \to |\Sd(K)|$.

A $\ZZ_2$-action on a simplicial complex $K$ is identified with an involution of $K$, i.e. a simplicial map $\alpha : K \rightarrow K$ satisfying $\alpha^2 = {\rm id}_K$. The category of $\ZZ_2$-simplicial complexes whose morphisms are $\ZZ_2$-simplicial maps is denoted by $\SS^{\ZZ_2}$.

For a pair $K$ and $L$ of simplicial complexes, we define the {\it product $K \times L$} as follows: The vertex set of $K \times L$ is $V(K) \times V(L)$. Let $p_1 : V(K) \times V(L) \rightarrow V(K)$ and $p_2 : V(K) \times V(L) \rightarrow V(L)$ be projections. Then $\sigma \subset V(K) \times V(L)$ is a simplex of $K \times L$ if and only if $p_1(\sigma) \in K$ and $p_2(\sigma) \in L$. Note that $|K \times L|$ and $|K| \times |L|$ are not homeomorphic in general. For example, if $K$ and $L$ are 1-simplices, then $|K \times L|$ is a 3-simplex but $|K| \times |L|$ is a square. If $K$ and $L$ are $\ZZ_2$-simplicial complexes with involutions $\alpha$ and $\beta$ respectively, then we consider the involution of $K \times L$ as $\alpha \times \beta$, i.e. the map $(v,w) \mapsto (\alpha(v), \beta(w))$.

\section{Box complexes and its left adjoint}

The purpose of this section is to prove Theorem \ref{thm 1.3}. As was mentioned in Section 1, it is important to consider how to formulate the box complex functor. Thus we first give the definition of our box complexes, and see that it is equivalent to other box complexes.

We start with recalling the exponential graphs, following \cite{Dochtermann} and \cite{ES}. For a pair $G$ and $H$ of graphs, the {\it exponential graph $H^G$} is defined by
$$V(H^G) = \{ f: V(G) \rightarrow V(H) \; | \; \textrm{$f$ is a set map}\},$$
$$E(H^G) = \{ (f,g) \; | \; \textrm{$(v,w) \in E(G)$ implies $(f(v), g(w)) \in E(H)$}\}.$$
Note that a vertex $f$ of $V(H^G)$ is looped if and only if $f$ is a graph homomorphism.

A {\it (looped) clique of a graph $G$} is a subset $\sigma$ of $V(G)$ such that $\sigma \times \sigma \subset E(G)$. The {\it clique complex $C(G)$} is the simplicial complex consisting of cliques of $G$.

\begin{dfn}
The box complex $B(G)$ of a graph $G$ is the clique complex of $G^{K_2}$. Note that a $\ZZ_2$-action on $K_2$, which flips the edge, induces a $\ZZ_2$-action on $B(G)$.
\end{dfn}

Note that a vertex of $B(G)$ is a graph homomorphism $e \colon K_2 \to G$, and hence is identified with an element $(e(1), e(2))$ of $E(G)$. In this way, we sometimes identify the vertex set of $B(G)$ with $E(G)$.

We now prove that our box complex is $\ZZ_2$-homotopy equivalent to other box complexes. We only draw a comparison between our box complex $B(G)$ and $\Hom(K_2,G)$, which is one of the most famous formulations of box complexes. In fact, this is easily deduced from known results. Since in this paper we only use the complex $\Hom(K_2, G)$ to connect $B(G)$ with other box complexes, we do not give the definition, but only refer to \cite{BK1}. For comparisons between $\Hom(K_2,G)$ and other box complexes, we refer to \cite{Zivaljevic}.

\begin{prop} \label{prop 3.2}
Our box complex $B(G)$ is naturally $\ZZ_2$-homotopy equivalent to $\Hom(K_2,G)$.
\end{prop}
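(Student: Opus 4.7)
The plan is to realize both $B(G)$ and $\Hom(K_2,G)$ as complexes whose cells are indexed by closely related combinatorial data, and then, after passing to derived subdivisions, exhibit them as the order complexes of a pair of $\ZZ_2$-posets connected by a natural equivariant Galois connection.

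First I would unpack the two sides. By the observation made just before the proposition, the vertex set of $B(G)$ is $E(G)$, and a non-empty finite $\sigma\subset E(G)$ is a simplex of $B(G)$ if and only if $\pi_1(\sigma)\times\pi_2(\sigma)\subset E(G)$, where $\pi_1,\pi_2\colon V(G)^2\to V(G)$ are the projections. On the other hand, following \cite{BK1}, a cell of $\Hom(K_2,G)$ is a pair $(A,B)$ of non-empty subsets of $V(G)$ with $A\times B\subset E(G)$, ordered by coordinatewise inclusion, with the $\ZZ_2$-action swapping the coordinates.

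Next I would set up the adjunction. Let $Q$ denote the face poset of $B(G)$ and $P$ the cell poset of $\Hom(K_2,G)$. Define
\[
r\colon Q\to P,\qquad r(\sigma)=(\pi_1\sigma,\pi_2\sigma),
\]
\[
s\colon P\to Q,\qquad s(A,B)=A\times B.
\]
Both maps are order-preserving, $\ZZ_2$-equivariant, and natural in $G$. A direct check gives $r\circ s=\mathrm{id}_P$ and $s\circ r\ge\mathrm{id}_Q$, so $(r,s)$ is a $\ZZ_2$-equivariant reflection. The standard principle that an adjunction between posets induces a homotopy equivalence of order complexes then yields a $\ZZ_2$-equivariant homotopy equivalence $|\Delta(Q)|\simeq|\Delta(P)|$; concretely, the homotopy $\mathrm{id}\simeq s\circ r$ on $|\Delta(Q)|$ is the straight-line homotopy along the $1$-chains $\sigma\le s(r(\sigma))$.

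Finally I would invoke the canonical $\ZZ_2$-homeomorphisms $|\Delta(Q)|=|\Sd B(G)|\cong |B(G)|$ and $|\Delta(P)|\cong|\Hom(K_2,G)|$, the latter being the standard identification of a regular polyhedral complex with its derived subdivision, to conclude the desired natural $\ZZ_2$-homotopy equivalence. The main point to watch is equivariance: the adjunction itself is essentially bookkeeping, but one must verify that $r$ and $s$ commute with the involutions and that the straight-line homotopy is therefore equivariant, being assembled from the chains $\sigma\le s(r(\sigma))$, each of which is preserved by the $\ZZ_2$-action.
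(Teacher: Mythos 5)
Your argument is correct, but it is a genuinely different route from the paper's: the paper disposes of Proposition \ref{prop 3.2} in one line by citing Proposition 3.5 and Remark 3.6 of Dochtermann's paper \cite{Dochtermann}, whereas you give a self-contained combinatorial proof. Your descriptions of the two sides are accurate (a simplex of $B(G)$ is exactly a set $\sigma\subset E(G)$ with $\pi_1(\sigma)\times\pi_2(\sigma)\subset E(G)$, and the cells of $\Hom(K_2,G)$ are the pairs $(A,B)$ with $A\times B\subset E(G)$), and the maps $r,s$ do satisfy $r\circ s=\mathrm{id}$ and $s\circ r\ge\mathrm{id}$ equivariantly and naturally in $G$; this is precisely the same order-homotopy device the paper itself uses in the proof of Proposition \ref{prop 4.1}, so your proof arguably fits the paper's toolkit better than the external citation does, at the cost of a page of bookkeeping. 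One small repair: the homotopy $\mathrm{id}\simeq s\circ r$ is not literally a straight-line homotopy inside $|\Delta(Q)|$, because for a chain $\sigma_0<\dots<\sigma_k$ the union $\{\sigma_i\}\cup\{s r(\sigma_i)\}$ need not be a chain (e.g.\ $\sigma_1$ and $sr(\sigma_0)$ are generally incomparable). The correct justification is the standard order-homotopy lemma: the map $Q\times\{0<1\}\to Q$, $(\sigma,0)\mapsto\sigma$, $(\sigma,1)\mapsto sr(\sigma)$, is order-preserving and $\ZZ_2$-equivariant, and $|\Delta(Q\times\{0<1\})|\cong|\Delta(Q)|\times[0,1]$ supplies the (equivariant) homotopy --- exactly the statement the paper cites from \cite{Csorba2} and \cite{Hirschhorn}. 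With that substitution, and the standard identifications of $|B(G)|$ with its barycentric subdivision and of the regular polyhedral complex $\Hom(K_2,G)$ with the order complex of its cell poset, your proof is complete.
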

\begin{proof}
This follows from Proposition 3.5 and Remark 3.6 in \cite{Dochtermann}.
\end{proof}

\begin{cor} \label{cor 3.2.1}
$B(K_n)$ is $\ZZ_2$-homotopy equivalent to $S^{n-2}$.
\end{cor}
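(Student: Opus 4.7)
The plan is to deduce the corollary directly from Proposition 3.2. That proposition supplies a natural $\ZZ_2$-homotopy equivalence $B(K_n) \simeq \Hom(K_2,K_n)$, so it suffices to invoke the classical computation that $\Hom(K_2,K_n)$ is $\ZZ_2$-homotopy equivalent to $S^{n-2}$, with the flip of $K_2$ going over to the antipodal action on the sphere. This computation goes back (implicitly) to Lov\'asz \cite{Lovasz} and is made explicit in Babson--Kozlov \cite{BK1}; combining the two statements yields the corollary in one line.

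If a self-contained argument is preferred, one can argue directly from the definition. Unwinding the construction, a vertex of $B(K_n)$ is a pair $(i,j)\in [n]\times [n]$ with $i\neq j$, and a finite subset $\sigma$ of such vertices is a simplex of $B(K_n)$ precisely when the first- and second-coordinate projections $\pi_1(\sigma),\pi_2(\sigma)\subset [n]$ are disjoint. Sending $(i,j)\mapsto e_i-e_j\in \RR^n$ and extending linearly on each simplex produces a $\ZZ_2$-map which avoids the origin (any convex combination over a simplex has both a strictly positive and a strictly negative coordinate, by disjointness), and landing after radial projection in the unit sphere of the hyperplane $\{x\in\RR^n\mid \sum_i x_i=0\}$ gives an explicit $\ZZ_2$-map
$$f\colon |B(K_n)|\longrightarrow S^{n-2}.$$

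The remaining step, and the only real obstacle in a direct approach, is to upgrade $f$ to a $\ZZ_2$-homotopy equivalence. The issue is that $|B(K_n)|$ typically has dimension larger than $n-2$: for $n=2m$ the pair $(A,B)$ with $|A|=|B|=m$ already contributes a full $(m^2-1)$-simplex, so $f$ cannot be a homeomorphism and one has to collapse the high-dimensional cells $A\times B$ onto the subcomplex generated by the $A\times\{b\}$ and $\{a\}\times B$. One can verify by induction on $n$, or by a direct elementary-collapse argument, that these collapses exhibit $|B(K_n)|$ as $\ZZ_2$-deformation retracting onto an $(n-2)$-sphere, identifying $f$ as a $\ZZ_2$-homotopy equivalence. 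In view of Proposition 3.2, however, there is nothing to redo here, and the corollary is cleanest as an immediate consequence of Proposition 3.2 and the literature.
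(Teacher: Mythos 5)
Your first paragraph is exactly the paper's proof: the corollary is cited there as an immediate consequence of Proposition \ref{prop 3.2} together with Proposition 4.3 of \cite{BK1}, which identifies $\Hom(K_2,K_n)$ with $S^{n-2}$ equivariantly. The supplementary direct sketch (the map $(i,j)\mapsto e_i-e_j$ and the collapsing argument) is a reasonable outline but is not needed and is not carried out in full; since you ultimately defer to Proposition \ref{prop 3.2} and the literature, your argument matches the paper's.
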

\begin{proof}
This follows from Proposition 4.3 of \cite{BK1} and Proposition \ref{prop 3.2}.
\end{proof}

\begin{rem}
Lov\'asz's neighborhood complex $N(G)$ is homotopy equivalent to $B(G)$ (see \cite{BK1}). Lov\'asz \cite{Lovasz} show the inequality $\chi (G) \geq \conn(N(G)) + 3$ for every graph $G$. Here for a space $X$, $\conn(X)$ denotes the largest integer $n$ such that $X$ is $n$-connected. Hell noted that if this topological lower bound of graphs $G$ and $H$ are tight, then $\chi(G \times H) = \min \{ \chi(G), \chi(H)\}$ holds (see the end of \cite{Hell}). However, this lower bound can be arbitrarily bad (see \cite{Matsushita 1} or Section 12 of \cite{Walker}).
\end{rem}

Next we recall Csorba's functor $A : \SS^{\ZZ_2} \rightarrow \GG$. Let $K$ be a $\ZZ_2$-simplicial complex with involution $\alpha$. The vertex set of $A(K)$ coincides with the vertex set of $K$. Two vertices $v$ and $w$ are adjacent if and only if $\{ \alpha(v), w\}$ is a simplex of $K$. Note that $v$ and $\alpha(v)$ are adjacent in $A(K)$ for every vertex $v$.

We are now ready to prove Theorem \ref{thm 1.2}. For the reader's convenience, we state it again:

\vspace{2mm} \noindent
{\bf Theorem 1.5.}
{\it Csorba's functor $A : \SS^{\ZZ_2} \rightarrow \GG$ is a left adjoint functor preserving finite limits. Moreover, the geometric realization of its right adjoint is naturally $\ZZ_2$-homotopy equivalent to the box complex functor.}

\vspace{2mm}
This theorem is a combination of Proposition \ref{prop 3.1} and Theorem \ref{thm 3.3} mentioned below.

\begin{prop} \label{prop 3.1}
The functor $A: \SS^{\ZZ_2} \rightarrow \GG$ preserves finite limits. In particular, $A$ preserves finite products and hence $A(K \times L)$ and $A(K) \times A(L)$ are isomorphic.
\end{prop}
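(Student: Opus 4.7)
The plan is to reduce the claim that $A$ preserves finite limits to the three standard cases: the terminal object, binary products, and equalizers. Once these are verified, the rest follows from the fact that every finite limit in any category with these three can be built from them. Each verification should be a direct unwinding of the combinatorial definitions of $A$ and of the relevant universal construction in $\SS^{\ZZ_2}$ and $\GG$.

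First I would handle the terminal object. In $\SS^{\ZZ_2}$, it is the one-point complex $\{v\}$ with trivial involution; in $\GG$, it is the single looped vertex. Applying the definition of $A$ to $\{v\}$, the set $\{\alpha(v), v\} = \{v\}$ is a simplex, so $v$ carries a loop in $A(\{v\})$. Hence $A$ sends the terminal to the terminal. Next, for binary products, I would simply compare the two graphs vertex by vertex and edge by edge. Both $A(K\times L)$ and $A(K)\times A(L)$ have vertex set $V(K)\times V(L)$. Writing the involutions of $K, L$ as $\alpha,\beta$, an edge in $A(K\times L)$ between $(v,w)$ and $(v',w')$ means that $\{(\alpha(v),\beta(w)),(v',w')\}$ is a simplex of $K\times L$; by the definition of the simplicial product recalled in Section 2 this is equivalent to $\{\alpha(v),v'\}\in K$ and $\{\beta(w),w'\}\in L$, i.e.\ to $v\sim v'$ in $A(K)$ and $w\sim w'$ in $A(L)$. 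That is exactly the edge condition in the tensor product $A(K)\times A(L)$, so the two graphs coincide.

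Finally, for equalizers, let $f,g\colon K\to L$ be a parallel pair of $\ZZ_2$-simplicial maps. Because they are equivariant, the vertex set $W=\{v\in V(K):f(v)=g(v)\}$ is $\ZZ_2$-invariant, and the equalizer $E$ in $\SS^{\ZZ_2}$ is the full subcomplex of $K$ on $W$ (with the restricted involution). The equalizer of $A(f),A(g)\colon A(K)\to A(L)$ in $\GG$ is the induced subgraph of $A(K)$ on the same set $W$, since $A$ is the identity on vertices. To check that $A(E)$ equals this induced subgraph, observe that for $v,w\in W$ the set $\{\alpha(v),w\}$ lies in $W$ (by $\ZZ_2$-invariance of $W$), so it is a simplex of $E$ iff it is a simplex of $K$; this says $v\sim w$ in $A(E)$ iff $v\sim w$ in $A(K)$, proving the two induced subgraphs agree.

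None of the three steps presents a real obstacle; the whole proof is a matter of reading off definitions. The only subtlety worth flagging is checking that the simplicial product of Section 2 is genuinely the categorical product in $\SS^{\ZZ_2}$ (so that it is the right thing to preserve), and that the equalizer really is the full subcomplex described above. Both are standard for abstract simplicial complexes and carry over to the $\ZZ_2$-equivariant setting because $\ZZ_2$-invariance passes through intersections.
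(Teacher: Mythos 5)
Your proposal is correct and follows essentially the same route as the paper: reduce finite limits to (terminal object,) binary products, and equalizers, then verify each by unwinding the edge condition $\{\alpha(v),w\}\in K$ against the combinatorial descriptions of the product and the full subcomplex on $\{v: f(v)=g(v)\}$. The only difference is that you explicitly check the terminal object and explicitly note the $\ZZ_2$-invariance of the equalizer's vertex set, both of which the paper leaves implicit.
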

\begin{proof}
Let $f,g : K \rightarrow L$ be $\ZZ_2$-simplicial maps. Let $\mathcal{E}(f,g)$ (and $\mathcal{E}(A(f), A(g))$) be the equalizer of $f$ and $g$ ($A(f)$ and $A(g)$, respectively). The vertex sets of $V(A \mathcal{E}(f,g))$ and $\mathcal{E}(A(f), A(g))$ coincide with the set $V(\mathcal{E}(f,g)) = \{ v \in V(K) \; | \; f(v) = g(v)\}$. For vertices $v, w \in V(\mathcal{E}(f,g))$, we have
\begin{eqnarray*}
(v,w) \in E \big( A \mathcal{E}(f,g) \big) &\Leftrightarrow & \{ \alpha(v),w\} \in \mathcal{E}(f,g) \; \Leftrightarrow \; \{ \alpha(v), w\} \in K\\
&\Leftrightarrow & (v,w) \in E\big(A(K) \big) \; \Leftrightarrow \; (v,w) \in E \big( \mathcal{E}(A(f), A(g)) \big).
\end{eqnarray*}
Thus the functor $A$ preserves finite equalizers.

Next we shall show that the functor $A$ preserves finite products. Note that the vertex sets of $A(K \times L)$ and $A(K) \times A(L)$ coincide with $V(K) \times V(L)$. Let $\alpha$ and $\beta$ be the involutions on $K$ and $L$, respectively. For a pair $(v,w)$ and $(v', w')$ of elements of $V(K) \times V(L)$, we have
\begin{eqnarray*}
\big( (v,w), (v',w')\big) \in E\big( A(K \times L) \big)
& \Leftrightarrow & \big\{ (\alpha(v),\beta(w)), (v',w')\big\} \in K \times L\\
& \Leftrightarrow & \big\{ \alpha(v), v'\big\} \in K,\; \{ \beta(w), w'\} \in L\\
& \Leftrightarrow & (v,v') \in E(A (K)), \; (w,w') \in E(A(L))\\
& \Leftrightarrow & \big( (v,w), (v',w') \big) \in E \big( A(K) \times A (L)\big).
\end{eqnarray*}
This completes the proof.
\end{proof}

We now prove the adjoint relation concerning the box complex functor $B$.

\begin{thm} \label{thm 3.3}
The pair $(A,B)$ of functors is an adjoint pair from $\SS^{\ZZ_2}$ to $\GG$. Therefore there exsits a graph homomorphism $A(K) \to G$ if and only if there is a $\ZZ_2$-simplicial map $K \to B(G)$.
\end{thm}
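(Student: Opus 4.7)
The plan is to construct an explicit natural bijection
$$\Phi_{K,G} : \Hom_\GG(A(K), G) \longrightarrow \Hom_{\SS^{\ZZ_2}}(K, B(G))$$
together with its inverse, and to verify well-definedness, equivariance, simpliciality, and mutual inverseness by unfolding the definitions of $A$ and $B$. Two identifications will be used throughout: a vertex of $B(G)$ is an ordered edge $(a,b) \in E(G)$ with the $\ZZ_2$-action swapping coordinates, and a finite set $\{(a_i,b_i)\}_{i=1}^n$ of such vertices spans a simplex of $B(G)$ iff $(a_i,b_j),(b_i,a_j) \in E(G)$ for all $i,j$ (the clique condition in $G^{K_2}$).

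Given $\varphi : A(K) \to G$, I will send it to $\tilde\varphi(v) := (\varphi(\alpha v), \varphi(v))$. First I would check this lands in $V(B(G))$: since $\{\alpha(\alpha v), v\} = \{v\} \in K$, we have $(v, \alpha v) \in E(A(K))$, so $\varphi$ carries it to an edge of $G$. The $\ZZ_2$-equivariance $\tilde\varphi(\alpha v) = (\varphi(v), \varphi(\alpha v))$ is immediate from $\alpha^2 = \mathrm{id}$. The main simpliciality check goes as follows: for any $\sigma = \{v_1, \ldots, v_n\} \in K$ and any $i,j$, the inclusion $\{v_i, v_j\} \subseteq \sigma$ gives $\{\alpha(\alpha v_i), v_j\} = \{v_i, v_j\} \in K$, hence $(\alpha v_i, v_j) \in E(A(K))$, so $(\varphi(\alpha v_i), \varphi(v_j)) \in E(G)$ by the graph-homomorphism property; this supplies exactly the clique condition above.

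For the inverse direction, given a $\ZZ_2$-simplicial map $\psi : K \to B(G)$, I would set $\hat\psi(v) := p_2(\psi(v))$, the second coordinate of the edge $\psi(v)$. To see $\hat\psi$ is a graph homomorphism, take $(v,w) \in E(A(K))$, so $\{\alpha v, w\} \in K$ and $\{\psi(\alpha v), \psi(w)\}$ is a $1$-simplex of $B(G)$. Writing $\psi(\alpha v) = (p_2 \psi(v), p_1 \psi(v))$ by equivariance and extracting the appropriate edge condition from the clique description yields $(\hat\psi(v), \hat\psi(w)) = (p_2 \psi(v), p_2 \psi(w)) \in E(G)$, as required.

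The remaining verifications are mechanical: $\widehat{\tilde\varphi}(v) = p_2(\varphi(\alpha v), \varphi(v)) = \varphi(v)$, and $\widetilde{\hat\psi}(v) = (p_2\psi(\alpha v), p_2\psi(v)) = (p_1\psi(v), p_2\psi(v)) = \psi(v)$ by equivariance; naturality in $K$ and $G$ is evident from the pointwise formulas, and the second assertion of the theorem follows by taking global sections of the adjunction. I do not anticipate any substantial obstacle — the argument is essentially a diagram chase — but the one point that must be handled with discipline is the identification of $V(B(G))$ with $E(G)$ and the convention for which coordinate the $\ZZ_2$-action swaps, since the wrong choice would convert $\hat{(-)}$ into its twist and destroy the bijection.
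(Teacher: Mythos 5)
Your proposal is correct and follows essentially the same route as the paper: an explicit hom-set bijection obtained by sending $\varphi$ to $v \mapsto (\varphi(\alpha v),\varphi(v))$ and extracting one coordinate for the inverse, with the clique condition in $G^{K_2}$ verified exactly as in the paper's argument. The only difference is a harmless mirrored convention (the paper uses $v \mapsto (\varphi(v),\varphi(\alpha v))$ and projects onto the first coordinate), which changes the natural isomorphism by the involution but not the substance of the proof.
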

\begin{proof}
We construct the precise natural isomorphisms
$$\Phi : \GG(A(K), G) \longrightarrow \SS^{\ZZ_2}(K, B(G)), \; \Psi : \SS^{\ZZ_2}(K, B(G)) \longrightarrow \GG(A(K), G)$$
such that $\Psi$ is the inverse of $\Phi$.

Let $f: A(K) \rightarrow G$ be a graph homomorphism. We define $\Phi(f)(v) \in V(G^{K_2})$ by
$$\Phi(f)(v)(i) = \begin{cases}
f(v) & (i = 1)\\
f(\alpha(v)) & (i = 2).
\end{cases}$$
In other words, $\Phi(f) (v)$ is the edge $(f(v), f(\alpha(v)))$ of $G$ when we identify $V(B(G))$ with $E(G)$. We show that for a simplex $\sigma$ of $K$, the set $\Phi(f)(\sigma) \subset V(G^{K_2})$ is a clique of $G^{K_2}$. This shows that $\Phi(f)(v)$ is looped in $G^{K_2}$ for every $v \in V(K)$ and $\Phi(f)$ is a simplicial map from $K$ to $B(G)$. Let $\{ x_0, \cdots, x_d\}$ be a simplex of $K$. For every pair $i$ and $j$, we have $\{ x_i, x_j\}$ is a simplex of $K$, in other words, $x_i \sim \alpha(x_j)$ and $\alpha(x_i) \sim x_j$ in $A(K)$. Since $f: A(K) \rightarrow G$ is a graph homomorphism, we have
$$\Phi(f)(x_i)(1) = f(x_i) \sim f(\alpha(x_j)) = \Phi(f)(x_j)(2),$$
$$\Phi(f)(x_i)(2) = f(\alpha(x_i)) \sim f(x_j) = \Phi(f)(x_j)(1).$$
Hence $\Phi(f)(x_i)$ and $\Phi(f)(x_j)$ are adjacent in $G^{K_2}$. Therefore we have that $\Phi(f) (\{ x_0, \cdots, x_d\})$ is a clique in $G^{K_2}$.

Next we show that $\Phi(f)$ is $\ZZ_2$-equivariant. First, we see the following equation
$$\Phi(f)(\alpha(v))(1) = f(\alpha(v)) = \Phi(f)(v)(2) = \Phi (f)(v)(\alpha_{K_2}(1)) = (\alpha_{G^{K_2}} (\Phi(f)(v)))(1).$$
Here $\alpha_{K_2}$ and $\alpha_{G^{K_2}}$ are the involutions of $K_2$ and $G^{K_2}$, respectively. Similarly, we can show $\Phi(f)(\alpha(v))(2) = \alpha_{G^{K_2}}(\Phi(f)(v))(2)$. Therefore we have $\Phi(f)(\alpha(v)) = \alpha_{G^{K_2}} (\Phi(f)(v))$. This means that $\Phi(f)$ is $\ZZ_2$-equivariant.

Next we construct $\Psi : \SS^{\ZZ_2}(K, B(G)) \rightarrow \GG(A(K), G)$. For a $\ZZ_2$-simplicial map $g: K \rightarrow B(G)$, define
$$\Psi(g)(v) = g(v)(1)$$
for every $v \in V(K)$. We must show that $\Psi(g) : V(A(K)) \rightarrow V(G)$ is a graph homomorphism. Let $v$ and $w$ be adjacent vertices in $A(K)$. This means $\{ \alpha(v), w\} \in K$ and hence we have $\{ g(\alpha(v)), g(w)\} \in B(G)$. Therefore we have
$$\Psi(g)(v) = g(v)(1) = \alpha_{G^{K_2}}^2 (g(v))(1) = g(\alpha(v))(2) \sim g(w)(1) = \Psi(g)(w).$$
This means that $\Psi(g)$ is a graph homomorphism.

Finally, we check that $\Psi$ is the inverse of $\Phi$. For a graph homomorphism $f: A (K) \rightarrow G$, we have
$$\Psi(\Phi(f))(v) = \Phi(f)(v)(1) = f(v)$$
for every $v \in V(A(K))$, and hence we have $\Psi \circ \Phi = {\rm id}$. On the other hand, for a $\ZZ_2$-simplicial complex $g: K \rightarrow B(G)$, we have
$$\Phi(\Psi(g))(v)(1) = \Psi(g)(v) = g(v)(1),$$
$$\Phi(\Psi(g))(v)(2) = \Psi(g)(\alpha(v)) = g(\alpha(v))(1) = (\alpha_{G^{K_2}} g(v))(1) = g(v)(2)$$
and hence $\Phi \circ \Psi ={\rm id}$. This completes the proof.
\end{proof}

\begin{rem}
Theorem \ref{thm 3.3} implies that there is always a $\ZZ_2$-map $K \to BA(K)$. One can easily show that this map is not an $\ZZ_2$-homotopy equivalence in general. In fact, if $K$ is a 4-cycle with the antipodal action, then $A(K) = K_4$ and hence $BA(K_4) \simeq_{\ZZ_2} S^2$. However, Csorba \cite{Csorba} showed that $BA(\Sd(K))$ and $K$ are $\ZZ_2$-homotopy equivalent.
\end{rem}

It is known that some formulations of box complexes behave like a right adjoint functor (see Theorem 1.6 in Dochtermann-Schultz \cite{DS}). The author noticed that using simplicial sets, we can construct a box complex functor having a left adjoint (see \cite{Matsushita 0}, Corollary 3.2 and Proposition 3.3 of \cite{Matsushita}), and introduce a model structure on the category of graphs. However, this left adjoint functor in \cite{Matsushita} does not preserve finite products although its definition is quite similar to our functor $A$.

\section{Products of $\ZZ_2$-simplicial complexes}

In this section, we study the products of $\ZZ_2$-simplicial complexes. We start with the following proposition. This proposition is probably known to experts, but we give its precise proof for the reader's convenience.


\begin{prop}\label{prop 4.1}
For every pair $K$ and $L$ of $\ZZ_2$-simplicial complexes, the natural map $p : |K \times L| \rightarrow |K| \times |L|$ (see the proof for the definition) is a $\ZZ_2$-homotopy equivalence.
\end{prop}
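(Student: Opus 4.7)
My plan is to define the natural map $p$ explicitly and then construct a $\ZZ_2$-equivariant homotopy inverse by combining the classical staircase triangulation of $|K| \times |L|$ with a symmetrization argument. The map $p$ sends a point $x = \sum_i t_i (v_i, w_i) \in |K \times L|$, expressed in barycentric coordinates inside a simplex $\{(v_0, w_0), \ldots, (v_d, w_d)\}$ of $K \times L$, to $(\sum_i t_i v_i, \sum_i t_i w_i)$. Since $\{v_0, \ldots, v_d\} \in K$ and $\{w_0, \ldots, w_d\} \in L$, this is a well-defined continuous map, and it is $\ZZ_2$-equivariant for the diagonal involution $\alpha \times \beta$ on $|K| \times |L|$.

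The key combinatorial input, immediate from the definition of $K \times L$, is the following rectangle closure property: if $\{(v_0, w_0), \ldots, (v_d, w_d)\}$ is a simplex of $K \times L$, then so is the full product $\{v_0, \ldots, v_d\} \times \{w_0, \ldots, w_d\}$. Consequently, for any $y \in |K| \times |L|$ the fiber $p^{-1}(y)$ is contained in a single closed simplex of $|K \times L|$, namely the product of the carriers of the two projections of $y$, and is cut out there by linear constraints, hence is convex.

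I would first fix arbitrary total orderings on $V(K)$ and $V(L)$ and let $K \otimes L \subset K \times L$ denote the staircase subcomplex whose simplices are the totally ordered chains (in the product order on $V(K) \times V(L)$) projecting onto simplices of $K$ and of $L$. A standard argument shows that $p$ restricts to a homeomorphism $|K \otimes L| \to |K| \times |L|$; let $s$ denote its inverse, a non-equivariant simplicial section of $p$. I would then symmetrize by setting
\[
\tilde s(y) = \tfrac{1}{2} \bigl( s(y) + \alpha \cdot s(\alpha y) \bigr).
\]
Both summands lie in the convex fiber $p^{-1}(y)$, so $\tilde s(y)$ is a well-defined point of $|K \times L|$, and $\tilde s$ is a continuous $\ZZ_2$-equivariant section of $p$.

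Finally, I would produce the equivariant homotopy $H \colon |K \times L| \times [0, 1] \to |K \times L|$ from $\mathrm{id}$ to $\tilde s \circ p$ by the straight-line formula $H(x, t) = (1 - t) x + t \cdot \tilde s(p(x))$. Rectangle closure ensures that for $x$ in a simplex $\tau$ of $K \times L$, the enlarged simplex $p_1(\tau) \times p_2(\tau)$ of $K \times L$ contains both $x$ and $\tilde s(p(x))$, so the line segment between them stays inside $|K \times L|$. Equivariance of $H$ follows from that of $p$ and $\tilde s$ together with the affine action of simplicial involutions on each simplex. The main obstacle is the failure of the ordered section $s$ to be equivariant; the symmetrization trick resolves it precisely because the rectangle closure property forces the fibers of $p$ to be convex subsets of single simplices of $K \times L$.
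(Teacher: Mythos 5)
Your proof is correct, but it takes a genuinely different route from the paper. The paper passes to face posets: it identifies $p$ (up to the natural homeomorphisms $|K|\cong|\Sd(K)|$) with the poset map $F(K\times L)\to FK\times FL$, $\sigma\mapsto(p_1(\sigma),p_2(\sigma))$, exhibits the equivariant section $i(\sigma,\tau)=\sigma\times\tau$ with $pi=\mathrm{id}$ and $ip\geq\mathrm{id}$, and invokes the equivariant order homotopy lemma (citing Csorba and Hirschhorn) to conclude. Note that your ``rectangle closure'' observation is exactly what makes the paper's section $i$ land back in $F(K\times L)$, so the two arguments share the same combinatorial kernel. What you do instead is build an explicit equivariant homotopy inverse at the space level: the non-equivariant section $s$ coming from the classical staircase triangulation, symmetrized via the convexity of the fibers $p^{-1}(y)$ inside the closed simplex $|\sigma\times\tau|$, followed by a straight-line deformation. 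All the steps check out: the fibers are indeed convex subsets of single closed simplices, the midpoint $\tilde s$ is continuous and equivariant because the involution acts affinely on each simplex, $p\circ\tilde s=\mathrm{id}$ on the nose, and the straight-line homotopy stays inside $|p_1(\tau)\times p_2(\tau)|$. Your approach is longer and leans on the standard (unproved here, but genuinely classical) fact that the staircase subcomplex triangulates $|K|\times|L|$, but it is more elementary and self-contained in that it avoids the equivariant comparison lemma for posets and the detour through barycentric subdivisions, and it yields an explicit equivariant section and deformation retraction, making visible why $p$ has contractible (convex) fibers. The paper's argument is shorter and fits the poset-theoretic toolkit used elsewhere in the paper, at the cost of outsourcing the homotopy-theoretic content to a cited lemma.
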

\begin{proof}
Let $p_1 : K \times L \rightarrow K$ and $p_2 : K \times L \rightarrow L$ be the projections. The natural map $p$ is given by $(|p_1|, |p_2|) : |K \times L| \rightarrow |K| \times |L|$. It suffices to show that $p : F(K \times L) \rightarrow FK \times FL, \; \sigma \mapsto (p_1(\sigma), p_2(\sigma))$ is a $\ZZ_2$-homotopy equivalence. Define $i : FK \times FL \rightarrow F(K \times L)$ by $i(\sigma, \tau) = \sigma \times \tau$. Then we have $pi = {\rm id}_{FK \times FL}$ and $ip \geq {\rm id}_{F(K \times L)}$, and $i$ and $p$ are $\ZZ_2$-equivariant. Hence $p$ is a $\ZZ_2$-homotopy equivalence (see Theorem 4 of \cite{Csorba2} or the proof of Proposition 14.3.10 of \cite{Hirschhorn}).
\end{proof}

Next we show the simplicial approximation theorem for products of $\ZZ_2$-simplicial complexes (Proposition \ref{prop approximation}). First we recall some terminology and notation concerning simplicial complexes from Section 2.C of \cite{Hatcher}. Let $K$ be a simplicial complex. For a simplex $\sigma$ of $K$, the {\it star of $\sigma$} is the subcomplex
$$\{ \tau \in K \; | \; \sigma \cup \tau \in K \},$$
of $K$. The {\it closed star of $\sigma$} is the geometric realization of the star of $\sigma$, and is denoted by $\St_K (\sigma)$. The {\rm open star of $\sigma$ in $\K$} is the union of the interior of $|\tau|$, where $\tau$ runs all simplices of the star of $\sigma$, and denoted by $\st_K(\sigma)$.

We are now ready to prove the $\ZZ_2$-simplicial approximation theorem of products.

\begin{prop} \label{prop approximation}
Let $K$, $L$, and $X$ be $\ZZ_2$-simplicial complexes, and $f : |\K| \times |\L| \rightarrow |\X|$ a $\ZZ_2$-map. Then there is an integer $k \geq 0$ and a simplicial map $\varphi : \Sd^k(K) \times \Sd^k(L) \rightarrow \X$ making the following diagram commute up to $\ZZ_2$-homotopy:

$$\xymatrix{
|\Sd^k(\K) \times \Sd^k(\L)| \ar[r]^{p \;\;} \ar[rrd]_{|\varphi| \; } & |\Sd^k(\K)| \times |\Sd^k(\L)| \ar[r]^{\;\;\;\;\;\;\;\;\;\; \cong} & |\K| \times |\L| \ar[d]^f\\
& & |\X|
}$$
\end{prop}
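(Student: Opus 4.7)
The plan is to run the classical simplicial approximation argument on the $\ZZ_2$-equivariant composite
\[
g_k \colon |\Sd^k \K \times \Sd^k \L| \xrightarrow{\, p \,} |\Sd^k \K| \times |\Sd^k \L| \xrightarrow{\, \cong \,} |\K| \times |\L| \xrightarrow{\, f \,} |\X|,
\]
where $p$ is the natural map of Proposition~\ref{prop 4.1} and the middle arrow is the canonical homeomorphism induced by the identifications $|K|\cong|\Sd^k K|$ and $|L|\cong|\Sd^k L|$. The crucial point is to produce $\varphi$ directly with source $\Sd^k \K \times \Sd^k \L$, without having to pass to a further subdivision of the categorical product.

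What makes this possible is the following open-star inclusion, which I would verify by unpacking barycentric coordinates for every vertex $(v,w)$:
\[
p\bigl(\st_{\Sd^k \K \times \Sd^k \L}(v,w)\bigr) \;\subseteq\; \st_{\Sd^k \K}(v) \times \st_{\Sd^k \L}(w).
\]
Indeed, any simplex $\sigma$ of the categorical product containing $(v,w)$ satisfies $v \in p_1(\sigma) \in \Sd^k \K$ and $w \in p_2(\sigma) \in \Sd^k \L$, and a point in the interior of $\sigma$ has barycentric coordinates that, pushed through $p_1$ and $p_2$, retain strictly positive weight at $v$ and at $w$ respectively. Since $|\K| \times |\L|$ is compact, the Lebesgue number lemma applied to the cover $\{f^{-1}(\st_{\X}(x))\}_{x \in V(\X)}$, together with the fact that the diameters of the products $\st_{\Sd^k \K}(v) \times \st_{\Sd^k \L}(w)$ tend uniformly to zero as $k \to \infty$, lets us fix $k$ and a vertex map $\varphi_0 \colon V(\K) \times V(\L) \to V(\X)$ with
\[
\st_{\Sd^k \K}(v) \times \st_{\Sd^k \L}(w) \;\subseteq\; f^{-1}\bigl(\st_{\X}(\varphi_0(v,w))\bigr)
\]
for every $(v,w)$.

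To force $\ZZ_2$-equivariance of $\varphi_0$, I would pick values on a set of orbit representatives for the diagonal $\ZZ_2$-action on $V(\K) \times V(\L)$ and then set $\varphi_0(\alpha v, \beta w) := \gamma\, \varphi_0(v,w)$ on the conjugate orbit. The star condition remains satisfied because $f$ is equivariant and $(\alpha \times \beta)(\st_{\Sd^k \K}(v) \times \st_{\Sd^k \L}(w)) = \st_{\Sd^k \K}(\alpha v) \times \st_{\Sd^k \L}(\beta w)$. At a fixed vertex $(v,w)$ of the product action the value $\varphi_0(v,w)$ must be a $\ZZ_2$-fixed vertex of $\X$; this is where the only genuine subtlety of the proof lies. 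Under the standard regularity convention that setwise $\ZZ_2$-fixed simplices of $\X$ are pointwise fixed (which, if necessary, is arranged by a preliminary barycentric subdivision of $\X$), the point $f(v,w) \in |\X|^{\ZZ_2}$ lies in the open star of some fixed vertex of $\X$, and that vertex is taken as $\varphi_0(v,w)$.

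The rest is routine. For any simplex $\sigma$ of $\Sd^k \K \times \Sd^k \L$ and any $z$ in its interior, the image $g_k(z)$ lies in $\bigcap_{(v,w) \in \sigma} \st_{\X}(\varphi_0(v,w))$, so $\{\varphi_0(v,w) : (v,w) \in \sigma\}$ is contained in a common simplex of $\X$; hence $\varphi_0$ extends linearly to a simplicial map $\varphi \colon \Sd^k \K \times \Sd^k \L \to \X$. Finally, the straight-line homotopy $H(z,t) = (1-t)\, g_k(z) + t\, |\varphi|(z)$ is well defined because, for each $z$, both endpoints lie in a common closed simplex of $|\X|$; it is automatically $\ZZ_2$-equivariant since $g_k$ and $|\varphi|$ both are, and it provides the required $\ZZ_2$-homotopy making the diagram commute.
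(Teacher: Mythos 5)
Your proposal is correct and follows essentially the same route as the paper: choose $k$ via the mesh/Lebesgue argument, use the key inclusion $p(\st_{\Sd^k K \times \Sd^k L}(v,w)) \subseteq \st_{\Sd^k K}(v) \times \st_{\Sd^k L}(w)$ to transfer the star condition to the categorical product, verify simpliciality by the open-star intersection criterion, and conclude with the straight-line homotopy. The only difference is that you spell out how to make the vertex map equivariant at fixed vertices, a point the paper's proof passes over by simply asserting that an equivariant choice of $\varphi$ exists.
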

\begin{proof}
Take $k$ to be sufficiently large so that for every $v \in V(\K)$ and $w \in V(\L)$, $f(\St_{\Sd^k(K)}(v) \times \St_{\Sd^k(L)}(w))$ is contained in some open star of a vertex of $\X$. For simplicity of the notation, we set $K' = \Sd^k(K)$ and $L' = \Sd^k(L)$.

Note that the natural map $p : |K' \times L'| \rightarrow |K'| \times |L'|$ satisfies the following property: For every vertex $(v,w) \in V(K' \times L')$, we have $p(\St_{K' \times L'}(v,w)) \subset \St_{K'}(v) \times \St_{L'}(w)$. Thus for every vertex $(v,w)$ of $K' \times L'$, we have that $f \circ |p|(\St_{K' \times L'}(v,w))$ is contained in some open star of a vertex of $\X$. Let $\varphi : V(K' \times L') \rightarrow V(X)$ be a $\ZZ_2$-set map satisfying $|\varphi|(\St_{K' \times L'}(v,w)) \subset \st_{X}(\varphi(v,w))$ for every $(v,w) \in V(K' \times L')$.

We show that $\varphi$ is a simplicial map. Let $\sigma = \{ (v_0,w_0), \cdots, (v_d, w_d)\}$ be a simplex of $K' \times L'$. Let $x$ be an interior point of $\sigma$, i.e. $x$ is an element of $\st_{K' \times L'}(v_0,w_0) \cap \cdots \cap \st_{K' \times L'}(v_d,w_d)$ by Lemma 2C.2 of \cite{Hatcher}.

Since $f\circ p(\st_{K' \times L'} (v_i,w_i)) \subset \st_{X} (\varphi(v_i,w_i))$, we have that $f \circ p(x)$ is an element of $\st(\varphi(v_0,w_0)) \cap \cdots \cap \st(\varphi(v_d,w_d))$. Thus it again follows from Lemma 2C.2 of \cite{Hatcher} that $\{ \varphi(v_0,w_0), \cdots, \varphi(v_d,w_d)\}$ is a simplex of $X$. Thus $\varphi$ is a simplicial map.

It is clear that for every $x \in |\Sd^k(K) \times \Sd^k(L)|$, $f \circ p(x)$ and $|\varphi|(x)$ are contained in some simplex. Hence the family of maps
$$(1-t) f \circ p + t |\varphi |, \; (t \in [0,1])$$ gives a homotopy from $f \circ p$ to $|\varphi|$. This completes the proof.
\end{proof}

\begin{cor} \label{cor approximation}
For a pair $K$ and $L$ of finite $\ZZ_2$-simplicial complexes and for a graph $G$, there is a $\ZZ_2$-continuous map from $|K| \times |L|$ to $|B(G)|$ if and only if there is a graph homomorphism from $A(\Sd^k (K) \times \Sd^k (L)) = A (\Sd^k (K)) \times A (\Sd^k (L))$ to $G$ for sufficiently large $k$.
\end{cor}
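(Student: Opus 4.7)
The plan is to chain together the adjunction from Theorem \ref{thm 3.3}, the product-preservation from Proposition \ref{prop 3.1}, the homotopy equivalence from Proposition \ref{prop 4.1}, and the approximation result from Proposition \ref{prop approximation}. Both directions reduce to standard bookkeeping once these ingredients are in place.

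For the forward direction, I would start with a $\ZZ_2$-continuous map $f : |K| \times |L| \to |B(G)|$. Since the natural homeomorphism $|K| \cong |\Sd^k(K)|$ is $\ZZ_2$-equivariant, I may regard $f$ as a $\ZZ_2$-map $|\Sd^k(K)| \times |\Sd^k(L)| \to |B(G)|$ for any $k$. Apply Proposition \ref{prop approximation} (with $X = B(G)$) to obtain, for some sufficiently large $k$, a $\ZZ_2$-simplicial map $\varphi : \Sd^k(K) \times \Sd^k(L) \to B(G)$. By the adjunction $(A,B)$ of Theorem \ref{thm 3.3}, $\varphi$ corresponds to a graph homomorphism $A(\Sd^k(K) \times \Sd^k(L)) \to G$, and Proposition \ref{prop 3.1} identifies the domain with $A(\Sd^k(K)) \times A(\Sd^k(L))$.

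For the converse, suppose for some $k$ there is a graph homomorphism $A(\Sd^k(K)) \times A(\Sd^k(L)) \to G$. By Proposition \ref{prop 3.1} this is a graph homomorphism out of $A(\Sd^k(K) \times \Sd^k(L))$, and by Theorem \ref{thm 3.3} it transposes to a $\ZZ_2$-simplicial map $\psi : \Sd^k(K) \times \Sd^k(L) \to B(G)$. Passing to geometric realizations gives a $\ZZ_2$-map $|\psi| : |\Sd^k(K) \times \Sd^k(L)| \to |B(G)|$. Using a $\ZZ_2$-homotopy inverse to the natural $\ZZ_2$-homotopy equivalence $p : |\Sd^k(K) \times \Sd^k(L)| \to |\Sd^k(K)| \times |\Sd^k(L)|$ supplied by Proposition \ref{prop 4.1}, together with the $\ZZ_2$-homeomorphism $|\Sd^k(K)| \times |\Sd^k(L)| \cong |K| \times |L|$, I obtain the desired $\ZZ_2$-continuous map $|K| \times |L| \to |B(G)|$.

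There is no real obstacle once the machinery of Sections 3 and 4 is assembled; the only point that requires any care is making sure that the intertwining of $\Sd^k$ with the product is handled by Proposition \ref{prop 3.1} (so that the hom-set statement and the product statement in the conclusion genuinely agree), and that the homotopy inverse to $p$ in the converse direction is chosen $\ZZ_2$-equivariantly, which is exactly what Proposition \ref{prop 4.1} provides.
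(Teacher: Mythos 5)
Your proposal is correct and follows essentially the same route as the paper: the paper's own proof is a one-line combination of Proposition \ref{prop 3.1} (to identify $A(\Sd^k(K)\times\Sd^k(L))$ with $A(\Sd^k(K))\times A(\Sd^k(L))$), the adjunction of Theorem \ref{thm 3.3}, and Proposition \ref{prop approximation}, which is exactly the chain you assemble. Your additional care about the $\ZZ_2$-equivariance of the approximating map and of the homotopy inverse to $p$ is warranted and is supplied by the proofs of Propositions \ref{prop approximation} and \ref{prop 4.1}, just as you say.
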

\begin{proof}
Since $A$ preserves finite products (Proposition \ref{prop 3.1}), we have $A(\Sd^k(K) \times \Sd^k(L)) = A(\Sd^k(K)) \times A(\Sd^k(L))$. Then this corollary follows from the adjoint property of $B$ (Theorem \ref{thm 3.3}) and Propostion \ref{prop approximation}.
\end{proof}

Taking $L$ to be a point, we have the following corollary. Of course, this corollary clearly follows from Theorem \ref{thm 1.3} and $\ZZ_2$-simplicial approximation theorem:

\begin{cor} \label{cor approximation 2}
For a $\ZZ_2$-simplicial complex $K$ and for a graph $G$, there is a $\ZZ_2$-continuous map from $|K|$ to $|B(G)|$ if and only if there is a graph homomorphism $A(\Sd^k(K))$ to $G$ for sufficiently large $k$.
\end{cor}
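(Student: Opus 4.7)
The most efficient route is to specialize Corollary \ref{cor approximation} to $L = *$, where $*$ denotes the one-vertex $\ZZ_2$-simplicial complex equipped with its (necessarily trivial) involution. First I would verify the two elementary topological identities: $|*|$ is a point, so $|K| \times |*|$ is canonically homeomorphic to $|K|$, and $\Sd^k(*) = *$ for every $k$. Together these reduce the continuous side of Corollary \ref{cor approximation} to the question of whether there is a $\ZZ_2$-continuous map from $|K|$ to $|B(G)|$.

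Next I would unwind Csorba's construction on $*$. Its unique vertex $v$ satisfies $\{\alpha(v), v\} = \{v\}$, which is a simplex of $*$, so $v$ is looped and $A(*)$ is the one-vertex graph with a single loop. The tensor product in $\GG$ with such a looped singleton is idempotent: for any graph $H$, the first projection $H \times A(*) \to H$ is a graph isomorphism because the loop at $v$ makes every edge condition at the second coordinate automatic. Applying this with $H = A(\Sd^k(K))$, and combining with the identification $A(\Sd^k(K) \times \Sd^k(*)) \cong A(\Sd^k(K)) \times A(*)$ supplied by Proposition \ref{prop 3.1}, identifies the combinatorial side of Corollary \ref{cor approximation} with the existence of a graph homomorphism $A(\Sd^k(K)) \to G$ for some sufficiently large $k$. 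Assembling the two identifications yields the statement.

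I do not expect any genuine obstacle; the essential work lies in Corollary \ref{cor approximation}, and the rest is a short bookkeeping step around a one-point complex. Should one wish to avoid referring to Corollary \ref{cor approximation} at all, a direct argument is equally brief: the ``if'' direction follows from the adjunction $(A,B)$ of Theorem \ref{thm 3.3} together with geometric realization and the natural homeomorphism $|\Sd^k(K)| \cong |K|$, while the ``only if'' direction applies the classical $\ZZ_2$-equivariant simplicial approximation theorem to a given $\ZZ_2$-continuous map $|K| \to |B(G)|$ to produce a $\ZZ_2$-simplicial map $\Sd^k(K) \to B(G)$, which is then transported adjointly to a graph homomorphism $A(\Sd^k(K)) \to G$.
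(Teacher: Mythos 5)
Your proposal is correct and matches the paper exactly: the paper obtains this corollary by taking $L$ to be a point in Corollary \ref{cor approximation}, and it also remarks (as you do) that a direct derivation from Theorem \ref{thm 1.3} and the classical $\ZZ_2$-simplicial approximation theorem is available. Your verification that $A(*)$ is the looped one-vertex graph and that tensoring with it is harmless simply fills in the bookkeeping the paper leaves implicit.
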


\section{Proof of Theorem \ref{thm 1.2}}

The purpose of this section is to complete the proof of Theorem \ref{thm 1.2}.

\begin{prop} \label{prop 3.5}
Let $K$ and $L$ be $\ZZ_2$-simplicial complexes. Then for sufficiently large $k$, we have
$$\ind (|K| \times |L|) + 2 = \chi \big( A ( \Sd^k(K) \times \Sd^k(L)) \big) = \chi \big( A \circ \Sd^k (K) \times A \circ \Sd^k(L) \big).$$
\end{prop}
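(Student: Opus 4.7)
The plan is to establish the two equalities separately. The second equality, $\chi\bigl(A(\Sd^k(K) \times \Sd^k(L))\bigr) = \chi\bigl(A\Sd^k(K) \times A\Sd^k(L)\bigr)$, is immediate from Proposition \ref{prop 3.1}, since $A$ preserves finite products and isomorphic graphs have the same chromatic number; this holds for every $k$, with no largeness assumption. All the work goes into the first equality. Abbreviate $M_k = A\bigl(\Sd^k(K) \times \Sd^k(L)\bigr)$ and $n = \ind(|K| \times |L|)$. The key translation is that $\chi(M_k) \le m$ means exactly that there is a graph homomorphism $M_k \to K_m$, while $\ind(|K| \times |L|) \le r$ means exactly that there is a $\ZZ_2$-map $|K| \times |L| \to S^r$.

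For the inequality $\chi(M_k) \le n + 2$ for sufficiently large $k$, I would start with a $\ZZ_2$-map $|K| \times |L| \to S^n$ coming from $\ind(|K| \times |L|) = n$. By Corollary \ref{cor 3.2.1}, $S^n$ is $\ZZ_2$-homotopy equivalent to $|B(K_{n+2})|$, so we obtain a $\ZZ_2$-continuous map $|K| \times |L| \to |B(K_{n+2})|$. Applying Corollary \ref{cor approximation} with $G = K_{n+2}$ yields a graph homomorphism $M_k \to K_{n+2}$ for all sufficiently large $k$, hence $\chi(M_k) \le n + 2$.

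For the reverse inequality $\ind(|K| \times |L|) + 2 \le \chi(M_k)$, which I would verify for every $k$, suppose $\chi(M_k) = m$, so that there exists a graph homomorphism $M_k \to K_m$. By the adjunction of Theorem \ref{thm 3.3}, this corresponds to a $\ZZ_2$-simplicial map $\Sd^k(K) \times \Sd^k(L) \to B(K_m)$. Taking geometric realizations and composing with the natural map of Proposition \ref{prop 4.1}, together with the evident $\ZZ_2$-homeomorphism $|\Sd^k(K)| \times |\Sd^k(L)| \cong |K| \times |L|$, produces a $\ZZ_2$-continuous map $|K| \times |L| \to |B(K_m)|$. Composing with the $\ZZ_2$-homotopy equivalence $|B(K_m)| \simeq_{\ZZ_2} S^{m-2}$ from Corollary \ref{cor 3.2.1} gives $\ind(|K| \times |L|) \le m - 2$, i.e.\ $n + 2 \le \chi(M_k)$.

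Combining the two bounds: for all sufficiently large $k$ we have $n + 2 \le \chi(M_k) \le n + 2$, proving equality. The main subtlety, rather than an obstacle, is bookkeeping about $k$: the upper bound requires passing to a barycentric subdivision via the simplicial approximation of Proposition \ref{prop approximation}, so it only holds for $k \gg 0$, while the lower bound is an adjunction argument that holds for every $k$. Everything else is a direct assembly of Theorem \ref{thm 3.3}, Corollary \ref{cor 3.2.1}, Proposition \ref{prop 4.1}, and Corollary \ref{cor approximation}.
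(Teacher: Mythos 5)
Your proposal is correct and follows essentially the same route as the paper: the upper bound $\chi(M_k)\le n+2$ via Corollary \ref{cor approximation} applied to a $\ZZ_2$-map $|K|\times|L|\to S^n\simeq_{\ZZ_2}|B(K_{n+2})|$, and the lower bound via the adjunction of Theorem \ref{thm 3.3} together with Proposition \ref{prop 4.1} and $|B(K_m)|\simeq_{\ZZ_2}S^{m-2}$ (the paper phrases this step as a contradiction with a homomorphism to $K_{n+1}$, yours as a direct inequality, which is the same argument). Your bookkeeping about which bound needs $k$ large is accurate and, if anything, slightly more explicit than the paper's.
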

\begin{proof}
Since $A$ preserves finite products (Proposition \ref{prop 3.1}), we have
$$\chi(A(\Sd^k(K) \times \Sd^k(L))) = \chi(A \circ \Sd^k(K) \times A \circ \Sd^k(L)).$$

Let $n$ be the $\ZZ_2$-index of $|K| \times |L|$. Then there is a $\ZZ_2$-map from $|K| \times |L|$ to $S^n = |B(K_{n+2})|$. Then Corollary \ref{cor approximation} implies that there is a graph homomorphism $A \circ \Sd^k(K) \times A \circ \Sd^k(L)$ to $K_{n+2}$. This means $\chi(A \circ \Sd^k(K) \times A \circ \Sd^k(L)) \le n+2$. On the other hand, if there is a graph homomorphism $A(\Sd^k(K) \times \Sd^k(L)) \to K_{n+1}$, there is a $\ZZ_2$-map $\Sd^k(K) \times \Sd^k(L)$ to $B(K_{n+1}) = S^{n-1}$. Since $|\Sd^k(K) \times \Sd^k(L)| \simeq_{\ZZ_2} |K| \times |L|$ and $n = \ind(|K| \times |L|)$, this is a contradiction.
\end{proof}

Taking $L$ to be a point, we have the following corollary:

\begin{cor} \label{cor 3.4}
Let $K$ be a $\ZZ_2$-simplicial complex. Then for sufficiently large $k$, we have
$$\ind \big( |K| \big) + 2 = \chi \big(A \circ \Sd^k(K) \big).$$
\end{cor}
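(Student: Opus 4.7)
The plan is to apply Proposition~\ref{prop 3.5} with $L$ chosen to be a single vertex equipped with the trivial $\ZZ_2$-action, as the paragraph introducing the corollary already suggests. Under this choice, $|K| \times |L|$ is canonically $\ZZ_2$-homeomorphic to $|K|$, so the left-hand side $\ind(|K| \times |L|) + 2$ collapses to $\ind(|K|) + 2$.

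For the right-hand side I would examine $A(\Sd^k(K)) \times A(\Sd^k(L))$. Since $\Sd^k(L) = L$, the graph $A(L)$ consists of a single vertex $*$ which is looped (because $\{\alpha(*), *\} = \{*\}$ is a simplex of $L$). By the definition of the categorical product of graphs, two vertices $(v, *)$ and $(w, *)$ of $A(\Sd^k(K)) \times A(L)$ are adjacent if and only if $(v, w) \in E(A(\Sd^k(K)))$ and $(*,*) \in E(A(L))$; the latter always holds, so this product is isomorphic as a graph to $A(\Sd^k(K))$. Combining $\chi\bigl(A(\Sd^k(K)) \times A(\Sd^k(L))\bigr) = \chi\bigl(A(\Sd^k(K))\bigr)$ with Proposition~\ref{prop 3.5} then yields the stated equality.

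The only subtle point is the loop at $*$: one has to verify explicitly that tensoring with a one-vertex looped graph does not alter the chromatic number of $A(\Sd^k(K))$, which is precisely the short edge-set computation above. An alternative, perhaps cleaner route is to mimic the proof of Proposition~\ref{prop 3.5} for a single complex, using Corollary~\ref{cor 3.2.1} (which gives $|B(K_m)| \simeq_{\ZZ_2} S^{m-2}$) together with Corollary~\ref{cor approximation 2} to translate a $\ZZ_2$-map $|K| \to S^n$ into a graph homomorphism $A(\Sd^k(K)) \to K_{n+2}$ for sufficiently large $k$, and Theorem~\ref{thm 3.3} in the reverse direction to exclude graph homomorphisms into $K_{n+1}$ whenever $\ind(|K|) = n$.
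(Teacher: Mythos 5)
Your proposal is correct and matches the paper's own derivation, which simply specializes Proposition~\ref{prop 3.5} to the case where $L$ is a point. Your explicit check that $A(L)$ is a single looped vertex and that tensoring with it leaves $A(\Sd^k(K))$ (and hence its chromatic number) unchanged is exactly the detail the paper leaves implicit.
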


We are now ready to prove Theorem \ref{thm 1.2}.

\vspace{2mm} \noindent
{\it Proof of Theorem \ref{thm 1.2}.}
Suppose that $\HH(n)$ is true (see Section 1 for the definition) and suppose that $\ind(X \times Y) = n-2$. Let $K$ and $L$ be finite $\ZZ_2$-simplicial complexes such that $|K| \simeq_{\ZZ_2} X$ and $|L| \simeq_{\ZZ_2} Y$. Then we have
\begin{eqnarray*}
\min \big\{ \ind \big(|K| \big), \ind \big(|L| \big) \big\} + 2 
&=& \lim_{k \rightarrow \infty} \min \big\{ \chi \big(A \circ \Sd^k(K)\big), \chi \big( A \circ \Sd^k(L) \big) \big\} \\
&=& \lim_{k\rightarrow \infty} \chi \big( (A \circ \Sd^k(K)) \times (A \circ \Sd^k(L))\big)\\
&=& \ind\big( |K| \times |L| \big) + 2.
\end{eqnarray*}
The first, second, and third equalities follow from Corollary \ref{cor 3.4}, $\HH(n)$, and Proposition \ref{prop 3.5}, respectively.
\qed

\section{Direct proof of Corollary \ref{cor 1.2.1}}

As is stated in Section 1, Corollary \ref{cor 1.2.1} follows from Theorem \ref{thm 1.2} and \cite{ES}, The purpose of this section is to give a direct proof of Corollary \ref{cor 1.2.1}. In fact, it is not necessary to assume that $\ZZ_2$-complexes $X$ and $Y$ in Corollary \ref{cor 1.2.1} are finite. 

Before giving the proof, we introduce some terminology. For a free $\ZZ_2$-complex $X$, we write $\overline{X}$ to indicate the orbit space of $X$. Suppose that $X$ is connected. Then $\pi_1(X)$ is a subgroup of $\pi_1 ( \overline{X} )$ with index $2$. We call an element $x$ of $\pi_1 ( \overline{X} )$ {\it even} if $x$ belongs to $\pi_1(X)$, and {\it odd} if not. The following lemma is easily deduced from covering space theory (see Proposition 1.33 of \cite{Hatcher}), so we omit the proof.

\begin{lem} \label{lem 6.1}
Let $X$ be a connected free $\ZZ_2$-complex. Then there is a $\ZZ_2$-map from $X$ to $S^1$ if and only if there is a group homomorphism $f : \pi_1 (\overline{X} ) \rightarrow \ZZ \; (\cong \pi_1(S^1))$ satisfying $f^{-1}(2\ZZ) = \pi_1(X)$.
\end{lem}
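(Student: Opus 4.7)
\vspace{2mm} \noindent
\textbf{Proof proposal.} My plan is to translate both sides of the equivalence into covering space data over $\overline{X}$ and then appeal to the classification of connected covers. The key observation I will use is that the antipodal $\ZZ_2$-action on $S^1$ has orbit space $\overline{S^1} \cong S^1$, and the induced double cover $p : S^1 \to \overline{S^1}$ is precisely the connected double cover corresponding to the subgroup $2\ZZ \subset \ZZ = \pi_1(\overline{S^1})$.

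For the $(\Rightarrow)$ direction, given a $\ZZ_2$-map $\phi : X \to S^1$, I would pass to orbit spaces to obtain $\overline{\phi} : \overline{X} \to \overline{S^1} = S^1$ and set $f := \overline{\phi}_*$. The inclusion $\pi_1(X) \subseteq f^{-1}(2\ZZ)$ is immediate from commutativity of the resulting square of covers. For the reverse inclusion, I will check that $f$ sends odd elements to odd integers: an odd loop $\gamma$ in $\overline{X}$ lifts to a path in $X$ from some $x_0$ to its $\ZZ_2$-image, which $\phi$ carries to a path in $S^1$ from $\phi(x_0)$ to its antipode; projecting back down to $\overline{S^1}$ recovers $\overline{\phi} \circ \gamma$, whose winding number must be odd because its lift to $S^1$ connects antipodal points.

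For the $(\Leftarrow)$ direction, starting from such an $f$, I will first invoke Proposition 1.33 of \cite{Hatcher} (equivalently, the fact that $S^1$ is a $K(\ZZ,1)$) to realize $f$ as $g_*$ for a continuous map $g : \overline{X} \to S^1 = \overline{S^1}$. The pullback cover $\overline{X} \times_{\overline{S^1}} S^1 \to \overline{X}$ is then a connected double cover corresponding to the subgroup $f^{-1}(2\ZZ) = \pi_1(X)$, hence isomorphic to $X \to \overline{X}$ as a cover of $\overline{X}$; the second projection of this pullback provides a continuous lift $X \to S^1$ of $g$.

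The one point that will require care, and that I take to be the main obstacle, is verifying that this lift is actually $\ZZ_2$-equivariant, which amounts to matching up the $\ZZ_2$-actions on $X$ and on the pullback. My plan here is to use the fact that on any connected two-sheeted cover the nontrivial deck transformation is uniquely determined: the given free $\ZZ_2$-action on $X$ agrees with the deck transformation of $X \to \overline{X}$ (since $\overline{X} = X/\ZZ_2$ and the action is free), and on the pullback cover the $\ZZ_2$-action is the one inherited from the antipodal action on $S^1$. Any isomorphism of connected double covers of $\overline{X}$ must intertwine these unique deck transformations, so the resulting lift is automatically $\ZZ_2$-equivariant.
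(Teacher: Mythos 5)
Your argument is correct and complete; the paper omits this proof entirely, saying only that the lemma "is easily deduced from covering space theory (see Proposition 1.33 of \cite{Hatcher})," and your proposal is precisely the standard covering-space argument being alluded to, including the equivariance verification via uniqueness of the nontrivial deck transformation, which is the one point genuinely worth writing down. A minor quibble: Proposition 1.33 of \cite{Hatcher} is the lifting criterion (which is what justifies your pullback/lifting step), while realizing $f$ by a map $g\colon \overline{X}\to S^1$ uses the $K(\ZZ,1)$ property of the circle, as you correctly note parenthetically.
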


We now turn to the proof of Corollary \ref{cor 1.2.1}. Note that $\pi_1 ( \overline{X \times Y} )$ can be regarded as a subgroup of $\pi_1 ( \overline{X} \times \overline{Y} ) = \pi_1 ( \overline{X} ) \times \pi_1 (\overline{Y} )$ consisting of pairs $(x,y)$ such that the parities of $x \in \pi_1 ( \overline{X} )$ and $y \in \pi_1 ( \overline{Y} )$ coincide. For a pair $x \in \pi_1 ( \overline{X} )$ and $y \in \pi_1 ( \overline{Y} )$ having the same parities, we write $(x,y)$ to indicate the associated element of $\pi_1 ( \overline{X \times Y} )$. Then $(x,y) \in \pi_1 ( \overline{X \times Y} )$ is odd if and only if $x$ is odd in $\pi_1 ( \overline{X} )$ (and hence $y$ is odd in $\pi_1 ( \overline{Y} )$).

Suppose that there is a $\ZZ_2$-map from $X \times Y$ to $S^1$. It follows from Lemma \ref{lem 6.1} that there is a group homomorphism $f : \pi_1 ( \overline{X \times Y}) \rightarrow \ZZ$ with $f^{-1}(2\ZZ) = \pi_1 (X \times Y )$. Let $(x_0, y_0)$ be an element of $\pi_1 ( \overline{X \times Y} )$ such that $f(x_0, y_0) = 1 \in \ZZ$. Then we have $f(x_0^2, y_0^2) = 2$. Since $f(x_0^2, 0) \in 2\ZZ$ and $f(0, y_0^2) \in 2\ZZ$, we have that one of $f(x_0^2,0) / 2$ and $f(0,y_0^2) / 2$ is odd. Without loss of generality, we can assume that $f(x_0^2, 0 ) / 2$ is odd. Then define the homomorphism $g: \pi_1 (\overline {X}) \rightarrow \ZZ$ by
$$g(x) = \frac{f(x^2,0)}{2}.$$
It suffices to show that $g$ is a group homomorphism and $g^{-1}(2\ZZ) = \pi_1(X)$.

Regard $\pi_1(\overline{X})$ as a subgroup of $\pi_1(\overline{X}) \times \pi_1(\overline{Y})$. From this viewpoint, the commutator subgroup of $\pi_1 ( \overline{X} )$ is contained in the commutator subgroup of $\pi_1 ( \overline{X \times Y} )$. In fact, for odd elements $x_1, x_2 \in \pi_1 ( \overline{X})$, we have
$$([x_1, x_2], 0) = [(x_1,y_0), (x_2,y_0)].$$
Other cases are similarly obtained. Therefore, for elements $x_1, x_2 \in \pi_1 \big( \overline{X} \big)$, we have $f(x_1x_2,0) = f(x_2x_1,0)$. Therefore we have
$$g(x_1 x_2) = \frac{f(x_1x_2 x_1 x_2,0)}{2} = \frac{f(x_1^2 x_2^2,0)}{2} = \frac{f(x_1^2,0)}{2} + \frac{f(x_2^2,0)}{2} = g(x_1) + g(x_2),$$
and hence $g$ is a group homomorphism.

Finally, we show $g^{-1}(2\ZZ) = \pi_1(X)$. It is clear that $g(x)$ is even for every even element $x$, and $g(x_0)$ is odd. For every odd element $x \in \pi_1 (\overline{X} )$, we have that $g(x) + g(x_0) = g(xx_0)$ is even and hence $g(x)$ is odd. This completes the proof.

\end{document}